\newtheorem{theorem}{Theorem}[section]
\newtheorem{proposition}[theorem]{Proposition}
\newtheorem{lemma}[theorem]{Lemma}
\newtheorem{definition}[theorem]{Definition}
\newtheorem{corollary}[theorem]{Corollary}
\newtheorem{remark}[theorem]{Remark}
\renewcommand{\theequation}{\thesection.\arabic{equation}}
\newenvironment{acknowledgement}{\smallskip{\sc Acknowledgement.}\rm}{\smallskip}
\numberwithin{equation}{section}
\newcounter{counterConstant}
\let\pdfoutput=\undefined\fi
\chardef\@x10\chardef\@xv60
\def\tcitime{
\def\@time{%
  \@minute\time\@hour\@minute\divide\@hour\@xv
  \ifnum\@hour<\@x 0\fi\the\@hour:%
  \multiply\@hour\@xv\advance\@minute-\@hour
  \ifnum\@minute<\@x 0\fi\the\@minute
  }}%
\def\x@hyperref#1#2#3{%
   \catcode`\~ = 12
   \catcode`\$ = 12
   \catcode`\_ = 12
   \catcode`\# = 12
   \catcode`\& = 12
   \y@hyperref{#1}{#2}{#3}%
}
\def\y@hyperref#1#2#3#4{%
   #2\ref{#4}#3
   \catcode`\~ = 13
   \catcode`\$ = 3
   \catcode`\_ = 8
   \catcode`\# = 6
   \catcode`\& = 4
}
\def\QCTOpt[#1]#2{%
  \def\QCTOptB{#1}
  \def\QCTOptA{#2}
}
\def\QCTNOpt#1{%
  \def\QCTOptA{#1}
  \let\QCTOptB\empty
}
\def\Qct{%
  \@ifnextchar[{%
    \QCTOpt}{\QCTNOpt}
}
\def\QCBOpt[#1]#2{%
  \def\QCBOptB{#1}%
  \def\QCBOptA{#2}%
}
\def\QCBNOpt#1{%
  \def\QCBOptA{#1}%
  \let\QCBOptB\empty
}
\def\Qcb{%
  \@ifnextchar[{%
    \QCBOpt}{\QCBNOpt}%
}
\def\PrepCapArgs{%
  \ifx\QCBOptA\empty
    \ifx\QCTOptA\empty
      {}%
    \else
      \ifx\QCTOptB\empty
        {\QCTOptA}%
      \else
        [\QCTOptB]{\QCTOptA}%
      \fi
    \fi
  \else
    \ifx\QCBOptA\empty
      {}%
    \else
      \ifx\QCBOptB\empty
        {\QCBOptA}%
      \else
        [\QCBOptB]{\QCBOptA}%
      \fi
    \fi
  \fi
}
\def\GRAPHICSPS#1{%
 \ifcase\GRAPHICSTYPE
   \special{ps: #1}%
 \or
   \special{language "PS", include "#1"}%
 \fi
}%
\def\graffile#1#2#3#4{%
    \bgroup
	   \@inlabelfalse
       \leavevmode
       \@ifundefined{bbl@deactivate}{\def~{\string~}}{\activesoff}%
        \raise -#4 \BOXTHEFRAME{%
           \hbox to #2{\raise #3\hbox to #2{\null #1\hfil}}}%
    \egroup
}%
\def\draftbox#1#2#3#4{%
 \leavevmode\raise -#4 \hbox{%
  \frame{\rlap{\protect\tiny #1}\hbox to #2%
   {\vrule height#3 width\z@ depth\z@\hfil}%
  }%
 }%
}%
\let\nographics=\@msidraft
\newif\ifwasdraft
\def\GRAPHIC#1#2#3#4#5{%
   \ifnum\@msidraft=\@ne\draftbox{#2}{#3}{#4}{#5}%
   \else\graffile{#1}{#3}{#4}{#5}%
   \fi
}
\def\addtoLaTeXparams#1{%
    \edef\LaTeXparams{\LaTeXparams #1}}%
\newif\ifBoxFrame \BoxFramefalse
\newif\ifOverFrame \OverFramefalse
\newif\ifUnderFrame \UnderFramefalse
\def\BOXTHEFRAME#1{%
   \hbox{%
      \ifBoxFrame
         \frame{#1}%
      \else
         {#1}%
      \fi
   }%
}
\def\doFRAMEparams#1{\BoxFramefalse\OverFramefalse\UnderFramefalse\readFRAMEparams#1\end}%
\def\readFRAMEparams#1{%
 \ifx#1\end%
  \let\next=\relax
  \else
  \ifx#1i\dispkind=\z@\fi
  \ifx#1d\dispkind=\@ne\fi
  \ifx#1f\dispkind=\tw@\fi
  \ifx#1t\addtoLaTeXparams{t}\fi
  \ifx#1b\addtoLaTeXparams{b}\fi
  \ifx#1p\addtoLaTeXparams{p}\fi
  \ifx#1h\addtoLaTeXparams{h}\fi
  \ifx#1X\BoxFrametrue\fi
  \ifx#1O\OverFrametrue\fi
  \ifx#1U\UnderFrametrue\fi
  \ifx#1w
    \ifnum\@msidraft=1\wasdrafttrue\else\wasdraftfalse\fi
    \@msidraft=\@ne
  \fi
  \let\next=\readFRAMEparams
  \fi
 \next
 }%
\def\IFRAME#1#2#3#4#5#6{%
      \bgroup
      \let\QCTOptA\empty
      \let\QCTOptB\empty
      \let\QCBOptA\empty
      \let\QCBOptB\empty
      #6%
      \parindent=0pt
      \leftskip=0pt
      \rightskip=0pt
      \setbox0=\hbox{\QCBOptA}%
      \@tempdima=#1\relax
      \ifOverFrame
          \typeout{This is not implemented yet}%
          \show\HELP
      \else
         \ifdim\wd0>\@tempdima
            \advance\@tempdima by \@tempdima
            \ifdim\wd0 >\@tempdima
               \setbox1 =\vbox{%
                  \unskip\hbox to \@tempdima{\hfill\GRAPHIC{#5}{#4}{#1}{#2}{#3}\hfill}%
                  \unskip\hbox to \@tempdima{\parbox[b]{\@tempdima}{\QCBOptA}}%
               }%
               \wd1=\@tempdima
            \else
               \textwidth=\wd0
               \setbox1 =\vbox{%
                 \noindent\hbox to \wd0{\hfill\GRAPHIC{#5}{#4}{#1}{#2}{#3}\hfill}\\%
                 \noindent\hbox{\QCBOptA}%
               }%
               \wd1=\wd0
            \fi
         \else
            \ifdim\wd0>0pt
              \hsize=\@tempdima
              \setbox1=\vbox{%
                \unskip\GRAPHIC{#5}{#4}{#1}{#2}{0pt}%
                \break
                \unskip\hbox to \@tempdima{\hfill \QCBOptA\hfill}%
              }%
              \wd1=\@tempdima
           \else
              \hsize=\@tempdima
              \setbox1=\vbox{%
                \unskip\GRAPHIC{#5}{#4}{#1}{#2}{0pt}%
              }%
              \wd1=\@tempdima
           \fi
         \fi
         \@tempdimb=\ht1
         \advance\@tempdimb by -#2
         \advance\@tempdimb by #3
         \leavevmode
         \raise -\@tempdimb \hbox{\box1}%
      \fi
      \egroup%
}%
\def\DFRAME#1#2#3#4#5{%
  \vspace\topsep
  \hfil\break
  \bgroup
     \leftskip\@flushglue
	 \rightskip\@flushglue
	 \parindent\z@
	 \parfillskip\z@skip
     \let\QCTOptA\empty
     \let\QCTOptB\empty
     \let\QCBOptA\empty
     \let\QCBOptB\empty
	 \vbox\bgroup
        \ifOverFrame 
           #5\QCTOptA\par
        \fi
        \GRAPHIC{#4}{#3}{#1}{#2}{\z@}%
        \ifUnderFrame 
           \break#5\QCBOptA
        \fi
	 \egroup
  \egroup
  \vspace\topsep
  \break
}%
\def\FFRAME#1#2#3#4#5#6#7{%
  \@ifundefined{floatstyle}
    {
     \begin{figure}[#1]%
    }
    {
	 \ifx#1h
      \begin{figure}[H]%
	 \else
      \begin{figure}[#1]%
	 \fi
	}
  \let\QCTOptA\empty
  \let\QCTOptB\empty
  \let\QCBOptA\empty
  \let\QCBOptB\empty
  \ifOverFrame
    #4
    \ifx\QCTOptA\empty
    \else
      \ifx\QCTOptB\empty
        \caption{\QCTOptA}%
      \else
        \caption[\QCTOptB]{\QCTOptA}%
      \fi
    \fi
    \ifUnderFrame\else
      \label{#5}%
    \fi
  \else
    \UnderFrametrue%
  \fi
  \begin{center}\GRAPHIC{#7}{#6}{#2}{#3}{\z@}\end{center}%
  \ifUnderFrame
    #4
    \ifx\QCBOptA\empty
      \caption{}%
    \else
      \ifx\QCBOptB\empty
        \caption{\QCBOptA}%
      \else
        \caption[\QCBOptB]{\QCBOptA}%
      \fi
    \fi
    \label{#5}%
  \fi
  \end{figure}%
 }%
\def\makeactives{
  \catcode`\"=\active
  \catcode`\;=\active
  \catcode`\:=\active
  \catcode`\'=\active
  \catcode`\~=\active
}
   \gdef\activesoff{%
      \def"{\string"}%
      \def;{\string;}%
      \def:{\string:}%
      \def'{\string'}%
      \def~{\string~}%
    }
\def\FRAME#1#2#3#4#5#6#7#8{%
 \bgroup
 \ifnum\@msidraft=\@ne
   \wasdrafttrue
 \else
   \wasdraftfalse%
 \fi
 \def\LaTeXparams{}%
 \dispkind=\z@
 \def\LaTeXparams{}%
 \doFRAMEparams{#1}%
 \ifnum\dispkind=\z@\IFRAME{#2}{#3}{#4}{#7}{#8}{#5}\else
  \ifnum\dispkind=\@ne\DFRAME{#2}{#3}{#7}{#8}{#5}\else
   \ifnum\dispkind=\tw@
    \edef\@tempa{\noexpand\FFRAME{\LaTeXparams}}%
    \@tempa{#2}{#3}{#5}{#6}{#7}{#8}%
    \fi
   \fi
  \fi
  \ifwasdraft\@msidraft=1\else\@msidraft=0\fi{}%
  \egroup
 }%
\def\TEXUX#1{"texux"}
\long\def\QQQ#1#2{%
     \long\expandafter\def\csname#1\endcsname{#2}}%
\long\def\QQA#1#2{}%
\def\QTR#1#2{{\csname#1\endcsname {#2}}}%
\def\EXPAND#1[#2]#3{}%
\def\NOEXPAND#1[#2]#3{}%
\def\LaTeXparent#1{}%
\def\ChildStyles#1{}%
\def\ChildDefaults#1{}%
\def\QTagDef#1#2#3{}%
  \providecommand{\UNICODE}[2][]{\protect\rule{.1in}{.1in}}
  \providecommand{\U}[1]{\protect\rule{.1in}{.1in}}
\def\QQfnmark#1{\footnotemark}
 \def\abstract{%
  \if@twocolumn
   \section*{Abstract (Not appropriate in this style!)}%
   \else \small 
   \begin{center}{\bf Abstract\vspace{-.5em}\vspace{\z@}}\end{center}%
   \quotation 
   \fi
  }%
   \def\registered{\relax\ifmmode{}\r@gistered
                    \else$\m@th\r@gistered$\fi}%
 \def\r@gistered{^{\ooalign
  {\hfil\raise.07ex\hbox{$\scriptstyle\rm\text{R}$}\hfil\crcr
  \mathhexbox20D}}}}{}%
\newdimen\theight
\def\newfmtname{LaTeX2e}
  \DeclareOldFontCommand{\rm}{\normalfont\rmfamily}{\mathrm}
  \DeclareOldFontCommand{\sf}{\normalfont\sffamily}{\mathsf}
  \DeclareOldFontCommand{\tt}{\normalfont\ttfamily}{\mathtt}
  \DeclareOldFontCommand{\bf}{\normalfont\bfseries}{\mathbf}
  \DeclareOldFontCommand{\it}{\normalfont\itshape}{\mathit}
  \DeclareOldFontCommand{\sl}{\normalfont\slshape}{\@nomath\sl}
  \DeclareOldFontCommand{\sc}{\normalfont\scshape}{\@nomath\sc}
\def\alpha{{\Greekmath 010B}}%
\def\beta{{\Greekmath 010C}}%
\def\gamma{{\Greekmath 010D}}%
\def\delta{{\Greekmath 010E}}%
\def\epsilon{{\Greekmath 010F}}%
\def\zeta{{\Greekmath 0110}}%
\def\eta{{\Greekmath 0111}}%
\def\theta{{\Greekmath 0112}}%
\def\iota{{\Greekmath 0113}}%
\def\kappa{{\Greekmath 0114}}%
\def\lambda{{\Greekmath 0115}}%
\def\mu{{\Greekmath 0116}}%
\def\nu{{\Greekmath 0117}}%
\def\xi{{\Greekmath 0118}}%
\def\pi{{\Greekmath 0119}}%
\def\rho{{\Greekmath 011A}}%
\def\sigma{{\Greekmath 011B}}%
\def\tau{{\Greekmath 011C}}%
\def\upsilon{{\Greekmath 011D}}%
\def\phi{{\Greekmath 011E}}%
\def\chi{{\Greekmath 011F}}%
\def\psi{{\Greekmath 0120}}%
\def\omega{{\Greekmath 0121}}%
\def\varepsilon{{\Greekmath 0122}}%
\def\vartheta{{\Greekmath 0123}}%
\def\varpi{{\Greekmath 0124}}%
\def\varrho{{\Greekmath 0125}}%
\def\varsigma{{\Greekmath 0126}}%
\def\varphi{{\Greekmath 0127}}%
\def\nabla{{\Greekmath 0272}}
\def\FindBoldGroup{%
   {\setbox0=\hbox{$\mathbf{x\global\edef\theboldgroup{\the\mathgroup}}$}}%
}
\def\Greekmath#1#2#3#4{%
    \if@compatibility
        \ifnum\mathgroup=\symbold
           \mathchoice{\mbox{\boldmath$\displaystyle\mathchar"#1#2#3#4$}}%
                      {\mbox{\boldmath$\textstyle\mathchar"#1#2#3#4$}}%
                      {\mbox{\boldmath$\scriptstyle\mathchar"#1#2#3#4$}}%
                      {\mbox{\boldmath$\scriptscriptstyle\mathchar"#1#2#3#4$}}%
        \else
           \mathchar"#1#2#3#4%
        \fi 
    \else 
        \FindBoldGroup
        \ifnum\mathgroup=\theboldgroup 
           \mathchoice{\mbox{\boldmath$\displaystyle\mathchar"#1#2#3#4$}}%
                      {\mbox{\boldmath$\textstyle\mathchar"#1#2#3#4$}}%
                      {\mbox{\boldmath$\scriptstyle\mathchar"#1#2#3#4$}}%
                      {\mbox{\boldmath$\scriptscriptstyle\mathchar"#1#2#3#4$}}%
        \else
           \mathchar"#1#2#3#4%
        \fi     	    
	  \fi}
\newif\ifGreekBold  \GreekBoldfalse
\let\SAVEPBF=\pbf
\def\pbf{\GreekBoldtrue\SAVEPBF}%
  \newcounter{equationnumber}  
  \def\mathletters{%
     \addtocounter{equation}{1}
     \edef\@currentlabel{\theequation}%
     \setcounter{equationnumber}{\c@equation}
     \setcounter{equation}{0}%
     \edef\theequation{\@currentlabel\noexpand\alph{equation}}%
  }
    \def\BibTeX{{\rm B\kern-.05em{\sc i\kern-.025em b}\kern-.08em
                 T\kern-.1667em\lower.7ex\hbox{E}\kern-.125emX}}}{}%
\def\AmS{{\protect\usefont{OMS}{cmsy}{m}{n}%
                A\kern-.1667em\lower.5ex\hbox{M}\kern-.125emS}}}{}%
\def\@@eqncr{\let\@tempa\relax
    \ifcase\@eqcnt \def\@tempa{& & &}\or \def\@tempa{& &}%
      \else \def\@tempa{&}\fi
     \@tempa
     \if@eqnsw
        \iftag@
           \@taggnum
        \else
           \@eqnnum\stepcounter{equation}%
        \fi
     \fi
     \global\tag@false
     \global\@eqnswtrue
     \global\@eqcnt\z@\cr}
\def\TCItag{\@ifnextchar*{\@TCItagstar}{\@TCItag}}
\def\@TCItag#1{%
    \global\tag@true
    \global\def\@taggnum{(#1)}%
    \global\def\@currentlabel{#1}}
\def\@TCItagstar*#1{%
    \global\tag@true
    \global\def\@taggnum{#1}%
    \global\def\@currentlabel{#1}}
\def\dint{\mathop{\displaystyle \int}}%
\def\diint{\mathop{\displaystyle \iint}}%
\def\ExitTCILatex{\makeatother }
\if@compatibility\message{amsmath already loaded}\fi\aftergroup\ExitTCILatex}
\if@compatibility\message{amstex already loaded}\fi\aftergroup\ExitTCILatex}
\if@compatibility\message{amsgen already loaded}\fi\aftergroup\ExitTCILatex}
\let\DOTSI\relax
\def\RIfM@{\relax\ifmmode}%
\def\FN@{\futurelet\next}%
\def\iint{\DOTSI\intno@\tw@\FN@\ints@}%
\def\iiint{\DOTSI\intno@\thr@@\FN@\ints@}%
\def\iiiint{\DOTSI\intno@4 \FN@\ints@}%
\def\idotsint{\DOTSI\intno@\z@\FN@\ints@}%
\def\ints@{\findlimits@\ints@@}%
\newif\iflimtoken@
\newif\iflimits@
\def\findlimits@{\limtoken@true\ifx\next\limits\limits@true
 \else\ifx\next\nolimits\limits@false\else
 \limtoken@false\ifx\ilimits@\nolimits\limits@false\else
 \ifinner\limits@false\else\limits@true\fi\fi\fi\fi}%
\def\multint@{\int\ifnum\intno@=\z@\intdots@                          
 \else\intkern@\fi                                                    
 \ifnum\intno@>\tw@\int\intkern@\fi                                   
 \ifnum\intno@>\thr@@\int\intkern@\fi                                 
 \int}
\def\multintlimits@{\intop\ifnum\intno@=\z@\intdots@\else\intkern@\fi
 \ifnum\intno@>\tw@\intop\intkern@\fi
 \ifnum\intno@>\thr@@\intop\intkern@\fi\intop}%
\def\intic@{%
    \mathchoice{\hskip.5em}{\hskip.4em}{\hskip.4em}{\hskip.4em}}%
\def\negintic@{\mathchoice
 {\hskip-.5em}{\hskip-.4em}{\hskip-.4em}{\hskip-.4em}}%
\def\ints@@{\iflimtoken@                                              
 \def\ints@@@{\iflimits@\negintic@
   \mathop{\intic@\multintlimits@}\limits                             
  \else\multint@\nolimits\fi                                          
  \eat@}
 \else                                                                
 \def\ints@@@{\iflimits@\negintic@
  \mathop{\intic@\multintlimits@}\limits\else
  \multint@\nolimits\fi}\fi\ints@@@}%
\def\intkern@{\mathchoice{\!\!\!}{\!\!}{\!\!}{\!\!}}%
\def\plaincdots@{\mathinner{\cdotp\cdotp\cdotp}}%
\def\intdots@{\mathchoice{\plaincdots@}%
 {{\cdotp}\mkern1.5mu{\cdotp}\mkern1.5mu{\cdotp}}%
 {{\cdotp}\mkern1mu{\cdotp}\mkern1mu{\cdotp}}%
 {{\cdotp}\mkern1mu{\cdotp}\mkern1mu{\cdotp}}}%
\def\RIfM@{\relax\protect\ifmmode}
\def\text{\RIfM@\expandafter\text@\else\expandafter\mbox\fi}
\let\nfss@text\text
\def\text@#1{\mathchoice
   {\textdef@\displaystyle\f@size{#1}}%
   {\textdef@\textstyle\tf@size{\firstchoice@false #1}}%
   {\textdef@\textstyle\sf@size{\firstchoice@false #1}}%
   {\textdef@\textstyle \ssf@size{\firstchoice@false #1}}%
   \glb@settings}
\def\textdef@#1#2#3{\hbox{{%
                    \everymath{#1}%
                    \let\f@size#2\selectfont
                    #3}}}
\newif\iffirstchoice@
\def\Let@{\relax\iffalse{\fi\let\\=\cr\iffalse}\fi}%
\def\vspace@{\def\vspace##1{\crcr\noalign{\vskip##1\relax}}}%
\def\multilimits@{\bgroup\vspace@\Let@
 \baselineskip\fontdimen10 \scriptfont\tw@
 \advance\baselineskip\fontdimen12 \scriptfont\tw@
 \lineskip\thr@@\fontdimen8 \scriptfont\thr@@
 \lineskiplimit\lineskip
 \vbox\bgroup\ialign\bgroup\hfil$\m@th\scriptstyle{##}$\hfil\crcr}%
\def\Sb{_\multilimits@}%
\def\endSb{\crcr\egroup\egroup\egroup}%
\def\Sp{^\multilimits@}%
\newdimen\ex@
\def\rightarrowfill@#1{$#1\m@th\mathord-\mkern-6mu\cleaders
 \hbox{$#1\mkern-2mu\mathord-\mkern-2mu$}\hfill
 \mkern-6mu\mathord\rightarrow$}%
\def\leftarrowfill@#1{$#1\m@th\mathord\leftarrow\mkern-6mu\cleaders
 \hbox{$#1\mkern-2mu\mathord-\mkern-2mu$}\hfill\mkern-6mu\mathord-$}%
\def\leftrightarrowfill@#1{$#1\m@th\mathord\leftarrow
\mkern-6mu\cleaders
 \hbox{$#1\mkern-2mu\mathord-\mkern-2mu$}\hfill
 \mkern-6mu\mathord\rightarrow$}%
\def\overrightarrow{\mathpalette\overrightarrow@}%
\def\overrightarrow@#1#2{\vbox{\ialign{##\crcr\rightarrowfill@#1\crcr
 \noalign{\kern-\ex@\nointerlineskip}$\m@th\hfil#1#2\hfil$\crcr}}}%
\def\overleftarrow{\mathpalette\overleftarrow@}%
\def\overleftarrow@#1#2{\vbox{\ialign{##\crcr\leftarrowfill@#1\crcr
 \noalign{\kern-\ex@\nointerlineskip}$\m@th\hfil#1#2\hfil$\crcr}}}%
\def\overleftrightarrow{\mathpalette\overleftrightarrow@}%
\def\overleftrightarrow@#1#2{\vbox{\ialign{##\crcr
   \leftrightarrowfill@#1\crcr
 \noalign{\kern-\ex@\nointerlineskip}$\m@th\hfil#1#2\hfil$\crcr}}}%
\def\underrightarrow{\mathpalette\underrightarrow@}%
\def\underrightarrow@#1#2{\vtop{\ialign{##\crcr$\m@th\hfil#1#2\hfil
  $\crcr\noalign{\nointerlineskip}\rightarrowfill@#1\crcr}}}%
\def\underleftarrow{\mathpalette\underleftarrow@}%
\def\underleftarrow@#1#2{\vtop{\ialign{##\crcr$\m@th\hfil#1#2\hfil
  $\crcr\noalign{\nointerlineskip}\leftarrowfill@#1\crcr}}}%
\def\underleftrightarrow{\mathpalette\underleftrightarrow@}%
\def\underleftrightarrow@#1#2{\vtop{\ialign{##\crcr$\m@th
  \hfil#1#2\hfil$\crcr
 \noalign{\nointerlineskip}\leftrightarrowfill@#1\crcr}}}%
\def\qopnamewl@#1{\mathop{\operator@font#1}\nlimits@}
\let\nlimits@\displaylimits
\def\setboxz@h{\setbox\z@\hbox}
\def\varlim@#1#2{\mathop{\vtop{\ialign{##\crcr
 \hfil$#1\m@th\operator@font lim$\hfil\crcr
 \noalign{\nointerlineskip}#2#1\crcr
 \noalign{\nointerlineskip\kern-\ex@}\crcr}}}}
 \def\rightarrowfill@#1{\m@th\setboxz@h{$#1-$}\ht\z@\z@
  $#1\copy\z@\mkern-6mu\cleaders
  \hbox{$#1\mkern-2mu\box\z@\mkern-2mu$}\hfill
  \mkern-6mu\mathord\rightarrow$}
\def\leftarrowfill@#1{\m@th\setboxz@h{$#1-$}\ht\z@\z@
  $#1\mathord\leftarrow\mkern-6mu\cleaders
  \hbox{$#1\mkern-2mu\copy\z@\mkern-2mu$}\hfill
  \mkern-6mu\box\z@$}
\def\projlim{\qopnamewl@{proj\,lim}}
\def\injlim{\qopnamewl@{inj\,lim}}
\def\varinjlim{\mathpalette\varlim@\rightarrowfill@}
\def\varprojlim{\mathpalette\varlim@\leftarrowfill@}
\def\varliminf{\mathpalette\varliminf@{}}
\def\varliminf@#1{\mathop{\underline{\vrule\@depth.2\ex@\@width\z@
   \hbox{$#1\m@th\operator@font lim$}}}}
\def\varlimsup{\mathpalette\varlimsup@{}}
\def\varlimsup@#1{\mathop{\overline
  {\hbox{$#1\m@th\operator@font lim$}}}}
\def\align{\@verbatim \frenchspacing\@vobeyspaces \@alignverbatim
You are using the "align" environment in a style in which it is not defined.}
\let\csname endalign*\endcsname =\endtrivlist
\def\alignat{\@verbatim \frenchspacing\@vobeyspaces \@alignatverbatim
You are using the "alignat" environment in a style in which it is not defined.}
\let\csname endalignat*\endcsname =\endtrivlist
\def\xalignat{\@verbatim \frenchspacing\@vobeyspaces \@xalignatverbatim
You are using the "xalignat" environment in a style in which it is not defined.}
\let\csname endxalignat*\endcsname =\endtrivlist
\def\gather{\@verbatim \frenchspacing\@vobeyspaces \@gatherverbatim
You are using the "gather" environment in a style in which it is not defined.}
\let\csname endgather*\endcsname =\endtrivlist
\def\multiline{\@verbatim \frenchspacing\@vobeyspaces \@multilineverbatim
You are using the "multiline" environment in a style in which it is not defined.}
\let\csname endmultiline*\endcsname =\endtrivlist
\def\arrax{\@verbatim \frenchspacing\@vobeyspaces \@arraxverbatim
You are using a type of "array" construct that is only allowed in AmS-LaTeX.}
\def\tabulax{\@verbatim \frenchspacing\@vobeyspaces \@tabulaxverbatim
You are using a type of "tabular" construct that is only allowed in AmS-LaTeX.}
\let\csname endarrax*\endcsname =\endtrivlist
\let\csname endtabulax*\endcsname =\endtrivlist
 \def\endequation{%
     \ifmmode\ifinner 
      \iftag@
        \addtocounter{equation}{-1} 
        $\hfil
           \displaywidth\linewidth\@taggnum\egroup \endtrivlist
        \global\tag@false
        \global\@ignoretrue   
      \else
        $\hfil
           \displaywidth\linewidth\@eqnnum\egroup \endtrivlist
        \global\tag@false
        \global\@ignoretrue 
      \fi
     \else   
      \iftag@
        \addtocounter{equation}{-1} 
        \eqno \hbox{\@taggnum}
        \global\tag@false%
        $$\global\@ignoretrue
      \else
        \eqno \hbox{\@eqnnum}
        $$\global\@ignoretrue
      \fi
     \fi\fi
 } 
 \newif\iftag@ \tag@false
 \def\TCItag{\@ifnextchar*{\@TCItagstar}{\@TCItag}}
 \def\@TCItag#1{%
     \global\tag@true
     \global\def\@taggnum{(#1)}%
     \global\def\@currentlabel{#1}}
 \def\@TCItagstar*#1{%
     \global\tag@true
     \global\def\@taggnum{#1}%
     \global\def\@currentlabel{#1}}
     \def\tag{\@ifnextchar*{\@tagstar}{\@tag}}
     \def\@tag#1{%
         \global\tag@true
         \global\def\@taggnum{(#1)}}
     \def\@tagstar*#1{%
         \global\tag@true
         \global\def\@taggnum{#1}}
\def\RM{\rm}
\def\qed{\hfill$\square$\par}
\def\diint{\mathop{\int\int}}
\def\dint{\displaystyle\int}
\def\XXint#1#2#3{{\setbox0=\hbox{$#1{#2#3}{\int}$ }
\vcenter{\hbox{$#2#3$ }}\kern-.6\wd0}}
\def\Qcb#1{#1}
\def\FRAME#1#2#3#4#5#6#7#8
\def\enddoc{

\begin{document}
\title[heat kernel upper bound]{Davies method for heat kernel upper bounds
of non-local Dirichlet forms on ultra-metric spaces}
\author[Gao]{Jin Gao}
\address{Department of Mathematical Sciences, Tsinghua University, Beijing
100084, China.}
\email{gao-j17@mails.tsinghua.edu.cn}
\thanks{\noindent Supported by NSFC No.11871296.}
\date{March 2019}

\begin{abstract}
We apply the Davies method to give a quick proof for upper estimate of the
heat kernel for the non-local Dirichlet form on the ultra-metric space. The
key observation is that the heat kernel of the truncated Dirichlet form
vanishes when two spatial points are separated by any ball of radius larger
than the truncated range. This new phenomenon arises from the ultra-metric
property of the space.
\end{abstract}

\subjclass[2010]{35K08, 28A80, 60J35}
\keywords{Heat kernel, ultra-metric, Davies method.}
\maketitle


\section{Introduction}

We are concerned with the heat kernel estimate for the non-local Dirichlet
form on the ultrametric space. Let $\left( M,d,\mu \right) $ be an \emph{%
ultra-metric measure} space, that is, $M$ is locally compact and separable, $%
d $ is an ultra-metric, and $\mu $ is a Radon measure with full support in $%
M $.

Recall that a metric $d$ is called an \emph{ultra-metric} if for any points $%
x,y,z\in M$,%
\begin{equation*}
d(x,y)\leq \max \{d(x,z),d(z,y)\}.
\end{equation*}%
For any $x\in M$ and $r>0$ the \emph{metric} ball $B(x,r)$ is defined by
\begin{equation*}
B(x,r):=\{y\in M,d(x,y)\leq r\}.
\end{equation*}%
It is known that any two metric balls are either \emph{disjoint} or one
\emph{contains} the other (see \cite{Bendikov2014Isotropic,BendikovGAHU2018})
. Thus any ball is both closed and open so that its boundary is empty.

An ultra-metric space is totally disconnected. So the process described by
the heat kernel is a pure jump process. In this paper, { we consider the
the Dirichlet form $\mathcal{E}$ only having non-local part without
killing part in the Beurling-Deny decomposition (see \cite[p.120]%
{FukushimaOshimaTakeda.2011.489})}. Let $\mathcal{E}$ be the energy form given by
\begin{equation}
\mathcal{E}(f,g)=\diint\limits_{M\times M\setminus
{ \text{diag}}}(f(x)-f(y))(g(x)-g(y))dj(x,y),  \label{EE}
\end{equation}%
where $j$ is a {  symmetric} Radon measure on $(M\times M)\setminus${ diag}. For simplicity, we denote $\mathcal{E}(f,f)$ by $\mathcal{E}(f)$. We need to specify the
domain of $\mathcal{E}$. Let $\mathcal{D}$ be the space defined by%
\begin{equation}
\mathcal{D}=\left\{ \sum_{i=0}^{n}c_{i}\mathbf{1}_{B_{i}}:n\geq 1,c_{i}\in
\mathbb{R}
, B_{i}\text{ are compact disjoint balls}\right\} .  \label{dd}
\end{equation}%
Let the \emph{space} $\mathcal{F}$ be determined by
\begin{equation}
\mathcal{F}\ =\text{ the \emph{closure }of }\mathcal{D}\text{ under norm}%
\sqrt{\mathcal{E}(u)+\left\Vert u\right\Vert _{2}^{2}}.  \label{FF}
\end{equation}%
Then $\left( \mathcal{E},\mathcal{F}\right) $ is a \emph{regular} Dirichlet
form in $L^{2}:=L^{2}(M,\mu )$ if the measure $j$ satisfies that $%
j(B,B^{c})<\infty $ for any ball $B$ {  (see \cite[Theorem 2.2]%
{BendikovGAHU2018})}.

Recall that the indicator function $\mathbf{1}_{B}$ for any ball $B$ belongs
to the space $\mathcal{F}$ if $j(B,B^{c})<\infty $, since
\begin{equation*}
\mathcal{E}(\mathbf{1}_{B})=2j(B,B^{c})<\infty \text{,}
\end{equation*}%
{ (see \cite[formula (4.1)]{BendikovGAHU2018})}.

In the sequel, we will fix some numbers $\alpha >0,\beta >0$, and some value
{ $R_{0}\in(0, \text{diam}~M]$, which contains the case when $R_0 =\text{diam}~M=\infty$}. The letter $C$ is universal positive constant which may vary at
each occurrence.

We list some conditions to be used later on.

\begin{itemize}

\item \textbf{Condition }$\left( TJ\right) $: there exists a transition
function $J(x,dy)$ such that
\begin{equation*}
dj(x,y)=J(x,dy)d\mu (x),
\end{equation*}%
and for any ball $B:=B(x,r)$ with $x\in M$ and $r\in (0,R_{0}),$
\begin{equation*}
J(x,B^{c})\leq \frac{C}{r^{\beta }}.
\end{equation*}

\item \textbf{Condition }$(DUE)$: the heat kernel $p_{t}(x,y)$ exists and
satisfies
\begin{equation*}
p_{t}(x,y)\leq \frac{C}{t^{\alpha /\beta }}
\end{equation*}%
for all $t\in (0,R_{0}^{\beta })$ and $\mu $-almost all $x$, $y\in M$.

\item \textbf{Condition }$(wUE)$\textbf{:} the heat kernel $p_{t}$ exists
and satisfies%
\begin{equation*}
p_{t}(x,y)\leq \frac{C}{t^{\alpha /\beta }}\left( 1+\frac{d(x,y)\wedge R_{0}%
}{t^{1/\beta }}\right) ^{-\beta }
\end{equation*}%
for all $t\in (0,R_{0}^{\beta })$ and $\mu $-almost all $x$, $y\in M$.
\end{itemize}

Our aim in this paper is to show the following implication.

\begin{theorem}
\label{T1} Let $\left( \mathcal{E},\mathcal{F}\right) $ be given by (\ref{EE}%
), (\ref{FF}) on an ultra-metric measure space. Then%
\begin{equation}
(TJ)+(DUE)\Rightarrow (wUE).  \label{1.20}
\end{equation}
\end{theorem}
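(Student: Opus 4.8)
The plan is to use the Davies method in the form tailored to non-local forms. The idea is to introduce, for a fixed truncation parameter $\rho>0$, the truncated Dirichlet form
\[
\mathcal{E}^{(\rho)}(f,g)=\diint\limits_{\{d(x,y)<\rho\}}(f(x)-f(y))(g(x)-g(y))\,dj(x,y),
\]
obtained by discarding the long jumps. A standard comparison shows that the corresponding heat kernel $p^{(\rho)}_t$ differs from $p_t$ in a controlled way: by the Meyer-type decomposition (removing a jump kernel of finite total mass $\sup_x J(x,B(x,\rho)^c)\le C\rho^{-\beta}$ by Condition $(TJ)$), one gets a pointwise bound like
\[
p_t(x,y)\le p^{(\rho)}_t(x,y)+Ct\,\rho^{-\beta}\sup_{s\le t}\|p^{(\rho)}_s\|_\infty\cdot(\text{something}),
\]
or more cleanly $p_t(x,y)\le p^{(\rho)}_t(x,y)+ \big(1-e^{-ct\rho^{-\beta}}\big)\esup p_s$-type terms. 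First I would make this comparison precise, so that it suffices to prove a good \emph{off-diagonal} bound for $p^{(\rho)}_t$.

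\textbf{The key structural step} — the ``new phenomenon'' advertised in the abstract — is that on an ultra-metric space $p^{(\rho)}_t(x,y)=0$ whenever $d(x,y)\ge\rho$. Indeed, in an ultra-metric space, if $d(x,y)\ge\rho$ then $x$ and $y$ lie in distinct balls of radius $\rho$ (take $B=B(x,\rho')$ for any $\rho'\in[\rho,d(x,y))$; this ball does not contain $y$, and since balls are clopen, $B$ and $M\setminus B$ are both open). The truncated form $\mathcal{E}^{(\rho)}$ only involves pairs with $d(x,y)<\rho$, and all such pairs lie on the \emph{same} side of the partition $\{B,\,M\setminus B\}$; hence $\mathbf{1}_B$ is $\mathcal{E}^{(\rho)}$-invariant, the semigroup $P^{(\rho)}_t$ commutes with multiplication by $\mathbf{1}_B$, and therefore $P^{(\rho)}_t$ preserves the decomposition $L^2=L^2(B)\oplus L^2(M\setminus B)$. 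Consequently the heat kernel satisfies $p^{(\rho)}_t(x,y)=0$ for $x\in B$, $y\notin B$, i.e. whenever a ball of radius $\ge\rho$ separates them. I would also record the on-diagonal bound $p^{(\rho)}_t(x,y)\le p_t(x,y)+(\text{small})\le Ct^{-\alpha/\beta}$ for $t\in(0,R_0^\beta)$, using $(DUE)$ and the comparison above (since $\mathcal{E}^{(\rho)}\le\mathcal{E}$, one even gets $p^{(\rho)}_t\ge p_t$ in a suitable sense, but what we need is the \emph{upper} bound, which follows from $(DUE)$ plus the finite-mass perturbation estimate).

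\textbf{To assemble the proof of $(wUE)$}, fix $t\in(0,R_0^\beta)$ and $x,y$ with $d:=d(x,y)>0$; we may assume $d\le R_0$ since the bound only involves $d\wedge R_0$. Choose the truncation radius $\rho\asymp d$ (more precisely any $\rho<d$ with $\rho\ge d/2$, say). Then by the vanishing property $p^{(\rho)}_t(x,y)=0$, so the comparison estimate gives
\[
p_t(x,y)\le C\,t\,\rho^{-\beta}\sup_{0<s\le t}\esup p^{(\rho)}_s \le C\,t\,d^{-\beta}\cdot t^{-\alpha/\beta}= \frac{C}{t^{\alpha/\beta}}\Big(\frac{t^{1/\beta}}{d}\Big)^{\beta}.
\]
Combined with the trivial bound $p_t(x,y)\le Ct^{-\alpha/\beta}$ from $(DUE)$, taking the minimum of the two yields
\[
p_t(x,y)\le \frac{C}{t^{\alpha/\beta}}\min\Big\{1,\Big(\frac{t^{1/\beta}}{d}\Big)^{\beta}\Big\}\le \frac{C}{t^{\alpha/\beta}}\Big(1+\frac{d\wedge R_0}{t^{1/\beta}}\Big)^{-\beta},
\]
which is exactly $(wUE)$. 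The case $d>R_0$ (when $R_0<\infty$) is handled by taking $\rho\asymp R_0$ instead; since $t<R_0^\beta$ the same arithmetic gives the bound with $d\wedge R_0=R_0$.

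\textbf{The main obstacle} I anticipate is making the perturbation comparison between $p_t$ and $p^{(\rho)}_t$ fully rigorous and quantitatively sharp — i.e. showing that chopping off the jump part supported on $\{d(x,y)\ge\rho\}$, whose ``intensity'' is controlled by $(TJ)$ via $J(x,B(x,\rho)^c)\le C\rho^{-\beta}$, costs only a factor $Ct\rho^{-\beta}$ times the (truncated) sup-norm of the heat kernel, rather than something larger. This is the non-local analogue of Davies' argument and is where one uses a Duhamel/Meyer decomposition of the semigroup together with the on-diagonal bound for $p^{(\rho)}_s$ (which in turn one must bootstrap from $(DUE)$). Care is also needed with the $\mu$-a.e. nature of the heat kernel and with verifying that $\mathcal{E}^{(\rho)}$ is a regular Dirichlet form to which the standard theory applies; but given the clopen-ball structure already recorded in the excerpt, these are routine.
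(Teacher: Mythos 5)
Your key structural step is correct, and it is in fact a cleaner route to the paper's Lemma \ref{L1} than the one the paper takes. Since on an ultra-metric space every $\rho$-ball centred in $B$ is contained in $B$ and every $\rho$-ball centred in $B^{c}$ is disjoint from $B$ (for $B$ of radius $r\geq \rho$), the truncated form decouples, $\mathcal{E}^{(\rho)}(u)=\mathcal{E}^{(\rho)}(u\mathbf{1}_{B})+\mathcal{E}^{(\rho)}(u\mathbf{1}_{B^{c}})$, so $L^{2}(B)$ and $L^{2}(B^{c})$ are invariant under the truncated semigroup and the truncated kernel vanishes across $B$. The paper reaches the same conclusion by running the full Davies machinery: a Nash inequality from $(DUE)$, the exact invariance $\mathcal{E}^{(\rho)}(e^{-\psi}f,e^{\psi}g)=\mathcal{E}^{(\rho)}(f,g)$ for $\psi=\lambda\mathbf{1}_{B}$, an iteration giving $\Vert Q_{t}^{\psi}\Vert_{1\rightarrow\infty}\leq Ct^{-1/\nu}e^{2K_{0}t}$, and finally $\lambda\rightarrow\infty$. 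Your observation that the exponential perturbation is literally trivial is the same fact seen more directly, and it would even let one bypass the Nash iteration for the purpose of the vanishing statement.

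The genuine gap is in your final assembly. The heat-kernel-level comparison you invoke,
\begin{equation*}
p_{t}(x,y)\leq p_{t}^{(\rho)}(x,y)+Ct\rho^{-\beta}\sup_{0<s\leq t}\esup p_{s}^{(\rho)},
\end{equation*}
is unusable as written: the only available on-diagonal bound is $\esup p_{s}^{(\rho)}\leq Cs^{-\alpha/\beta}$, so $\sup_{0<s\leq t}\esup p_{s}^{(\rho)}=\infty$, and the Duhamel integral $\int_{0}^{t}s^{-\alpha/\beta}\,ds$ diverges unless $\alpha<\beta$; you cannot replace $\sup_{0<s\leq t}s^{-\alpha/\beta}$ by $t^{-\alpha/\beta}$ since that function is decreasing. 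The alternative Meyer-type kernel comparison with error $2t\Vert J_{\geq\rho}\Vert_{\infty}$ requires a bounded \emph{density} of the removed jump kernel with respect to $\mu\otimes\mu$, which condition $(TJ)$ does not supply (it only bounds the total mass $J(x,B(x,\rho)^{c})$). The paper closes this by working one level down: it applies the semigroup comparison $P_{t}f\leq Q_{t}f+2t\sup_{x}J(x,B(x,\rho)^{c})\Vert f\Vert_{\infty}$ (Proposition \ref{P8}) to the bounded function $f=\mathbf{1}_{B^{c}}$, where no on-diagonal blow-up enters, obtaining the tail estimate $P_{t}\mathbf{1}_{B^{c}}\leq Ct(r\wedge R_{0})^{-\beta}$ on $B$, and then upgrades this to a pointwise off-diagonal bound via the Chapman--Kolmogorov splitting $p_{2t}(x,y)=\int_{M}p_{t}(x,z)p_{t}(z,y)\,d\mu(z)$, estimating the integrals over $B(x_{0},r)^{c}$ and $B(y_{0},r)^{c}$ with $(DUE)$ for one factor and the tail estimate for the other. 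Your argument needs this self-improvement step (or an equivalent) to close; with it, the rest goes through.
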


The result (\ref{1.20}) is not new  (see \cite[Lemma $5.2$, Theorem $12.1$ and { Subsection} $12.3$]{BendikovGAHU2018}), and was proved in \cite[Theorem $2.8$]{BendikovGAHU2018} by using a very complicated idea that was developed in
several papers \cite{GrigoryanHuHu.2017.JFA3311, GrigoryanHuHu.2018,
GrigoryanHuLau.2010.JFA2613, GrigoryanHuLau.2014.TAMS6397} (
After this paper has been finished, {  we were aware that a much simpler proof
was presented in \cite{BendikovGAHU2018} recently)}.
What is new in this paper is that we apply the Davies method developed in
\cite{Davies.1987.AJM319, CarlenKusuokaStroock.1987.AIHPPS245} (see also
\cite{MuruganSaloff.2015.dav}, \cite[p1152]{HuLi.2018} { on general metric
measure spaces including fractals}) to give a much simpler proof for the
implication (\ref{1.20}). The key observation here is that the heat kernel
for the truncated Dirichlet form vanishes at any time when points are
separated by a ball of radius larger than the truncated range { (see Lemma \ref%
{L1} below)}. This new phenomenon arises from the ultra-metric property of the
space $M$ ({ General metric spaces} do not admit this nice property).
It would be interesting to generalize
Theorem \ref{T1} by using the Davies method to the  more general case when the condition $\left(DUE\right)$  becomes
\begin{align}
p_t(x,y)\leq \frac{C}{V(x,\phi^{-1}(t))},\notag
\end{align}
where $\phi$ is an increasing function on $[0,\infty)$, and { $V(x,r)=\mu(B(x,r))$ }is
the volume of the ball $B(x,r)$ that is also sensitive to the center $x$.

\begin{acknowledgement}
The author thanks Jiaxin Hu, Alexander Grigor'yan for suggesting this topic
and Eryan Hu for discussions.
\end{acknowledgement}

\section{Heat kernel for the truncated Dirichlet form}

We need to consider the truncated form and then estimate the corresponding
heat kernel. We will do this by using the Davies method.

For any $\rho >0,$ let $\mathcal{E}^{(\rho)}$ be defined by
\begin{equation}
\mathcal{E}^{(\rho)}(u,v)=\int_{M}\int_{B(x,\rho
)}(u(x)-u(y))(v(x)-v(y))dj(x,y).  \label{eq:DF_q}
\end{equation}%
It is known that if
\begin{equation}
{ \sup_{x\in M}\int_{B(x,r)^{c}}J(x,dy)<\infty,  ~\quad~~~~~~r\in (0,R_{0})}\label{31}
\end{equation}%
then $\left(\mathcal{E}^{(\rho )},\mathcal{F}\right)$ is closable and its {  closure} form
$\left(\mathcal{E}^{(\rho )},\mathcal{F}^{(\rho )}\right)$ is a \emph{regular} Dirichlet
form { (see \cite[Proposition 4.2]{GrigoryanHuLau.2014.TAMS6397})}. Condition $
\left( TJ\right) $ implies condition (\ref{31}), then condition $\left( TJ\right)$
also implies that the form $(\mathcal{E}^{(\rho )},\mathcal{F}^{(\rho )})$ is
regular { (see \cite[Theorem 2.2]{BendikovGAHU2018})}.

\begin{proposition}
If conditions $\left( TJ\right) ,\left( DUE\right) $ hold, then the
following functional inequality called Nash inequality holds: there exists a
constant { $C_N>0$ (subscript $N$ means this constant comes from \lq\lq Nash" inequality)} such that for any $u\in \mathcal{F}\cap L^{1}$,
\begin{equation}
\Vert u\Vert _{2}^{2(1+\nu )}\leq  { C_N} \left( \mathcal{E}^{(\rho)}(u)+K_{0}\Vert
u\Vert _{2}^{2}\right) \left\Vert u\right\Vert _{1}^{2\nu },  \label{Nash}
\end{equation}%
where $\nu :={\beta }/{\alpha }$ and $K_{0}:=\rho ^{-\beta }+R_{0}^{-\beta }$%
.
\end{proposition}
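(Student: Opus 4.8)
\medskip
\noindent\textbf{Proof idea.}
The plan is to deduce the Nash inequality from the classical equivalence between on-diagonal heat kernel bounds and Nash-type inequalities, applied to the \emph{original} form $\mathcal{E}$, and then to replace $\mathcal{E}(u)$ by $\mathcal{E}^{(\rho)}(u)$ at the cost of the lower-order term $K_{0}\Vert u\Vert_{2}^{2}$, using condition $(TJ)$ together with the symmetry of $j$.

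First I would note that $(DUE)$, which is assumed only on the finite interval $(0,R_{0}^{\beta})$, self-improves to a global estimate after a harmless exponential tilt. Since $t\mapsto\Vert P_{t}\Vert_{L^{1}\to L^{\infty}}$ is non-increasing (write $P_{t}=P_{t-s}P_{s}$ for $s<t$ and use that $P_{t-s}$ contracts $L^{\infty}$), $(DUE)$ gives $p_{t}(x,y)\le CR_{0}^{-\alpha}$ for $t\ge R_{0}^{\beta}$; multiplying the kernel by $e^{-tR_{0}^{-\beta}}$ and using that $s\mapsto e^{-s}s^{\alpha/\beta}$ is bounded on $[0,\infty)$ yields
\begin{equation*}
e^{-tR_{0}^{-\beta}}\,p_{t}(x,y)\le \frac{C}{t^{\alpha/\beta}}\qquad\text{for all }t>0
\end{equation*}
(if $R_{0}=\infty$ this step is vacuous, as $(DUE)$ then already holds for all $t>0$). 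Applying the Carlen--Kusuoka--Stroock equivalence between ultracontractivity estimates and Nash inequalities \cite{CarlenKusuokaStroock.1987.AIHPPS245} to the perturbed Dirichlet form $\mathcal{E}(u)+R_{0}^{-\beta}\Vert u\Vert_{2}^{2}$, and observing that the on-diagonal exponent $\alpha/\beta$ equals $1/\nu$, I obtain a constant $C$ with
\begin{equation*}
\Vert u\Vert_{2}^{2(1+\nu)}\le C\bigl(\mathcal{E}(u)+R_{0}^{-\beta}\Vert u\Vert_{2}^{2}\bigr)\Vert u\Vert_{1}^{2\nu},\qquad u\in\mathcal{F}\cap L^{1}.
\end{equation*}

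Next I would compare the two energies. By $(\ref{EE})$ and $(\ref{eq:DF_q})$, for $u\in\mathcal{F}$ (so that both sides are finite),
\begin{equation*}
\mathcal{E}(u)-\mathcal{E}^{(\rho)}(u)=\int_{M}\int_{B(x,\rho)^{c}}(u(x)-u(y))^{2}\,dj(x,y)\le 2\int_{M}\int_{B(x,\rho)^{c}}\bigl(u(x)^{2}+u(y)^{2}\bigr)\,dj(x,y).
\end{equation*}
Since $j$ is symmetric and $d(x,y)=d(y,x)$, the $u(y)^{2}$ contribution equals the $u(x)^{2}$ contribution, so the right-hand side equals $4\int_{M}u(x)^{2}\,J\bigl(x,B(x,\rho)^{c}\bigr)\,d\mu(x)$. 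Condition $(TJ)$ bounds $J(x,B(x,\rho)^{c})$ by $C\rho^{-\beta}$ when $\rho<R_{0}$, and by $CR_{0}^{-\beta}$ when $\rho\ge R_{0}$ (use $B(x,r)\subseteq B(x,\rho)$ for every $r<R_{0}$ and let $r\uparrow R_{0}$); in either case $J(x,B(x,\rho)^{c})\le C(\rho\wedge R_{0})^{-\beta}\le CK_{0}$. Hence $\mathcal{E}(u)\le\mathcal{E}^{(\rho)}(u)+CK_{0}\Vert u\Vert_{2}^{2}$, and plugging this into the displayed Nash inequality, together with $R_{0}^{-\beta}\le K_{0}$, gives $(\ref{Nash})$ with $C_{N}$ depending only on the constants in $(TJ)$ and $(DUE)$.

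I expect the only real obstacle to be the first step: producing the Nash inequality in exactly the stated shape, with the correction term $R_{0}^{-\beta}\Vert u\Vert_{2}^{2}$, out of a heat-kernel bound valid only for $t<R_{0}^{\beta}$ --- the exponential-tilt argument above is the clean route, and alternatively one may simply quote the known local version of the Nash/ultracontractivity equivalence. The energy comparison is then routine, once one uses that in an ultra-metric space the far region $\{d(x,y)>\rho\}$ is symmetric and that $J(x,B(x,\rho)^{c})$ is uniformly controlled by $(TJ)$.
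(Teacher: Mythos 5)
Your proposal is correct and follows essentially the same route as the paper: cite the Carlen--Kusuoka--Stroock equivalence to get $\Vert u\Vert_{2}^{2(1+\nu)}\leq C\left(\mathcal{E}(u)+R_{0}^{-\beta}\Vert u\Vert_{2}^{2}\right)\Vert u\Vert_{1}^{2\nu}$ from $(DUE)$, then bound $\mathcal{E}(u)-\mathcal{E}^{(\rho)}(u)$ by $C\rho^{-\beta}\Vert u\Vert_{2}^{2}$ using the symmetry of $j$ and $(TJ)$. The only difference is that you spell out the exponential-tilt step for the local-in-time $(DUE)$ and the case $\rho\geq R_{0}$, both of which the paper leaves implicit.
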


\begin{proof}
Since $\left( DUE\right) $ holds, we have by \cite[Theorem (2.1)]%
{CarlenKusuokaStroock.1987.AIHPPS245} that
\begin{equation}
\Vert u\Vert _{2}^{2(1+\nu )}\leq C\left( \mathcal{E}(u)+R_{0}^{-\beta
}\Vert u\Vert _{2}^{2}\right) \left\Vert u\right\Vert _{1}^{2\nu }.
\label{3.1}
\end{equation}%
On the other hand, { by condition $\left( TJ\right)$ and the
symmetry of $j$, we have},%
\begin{eqnarray*}
\mathcal{E}(u)-\mathcal{E}^{(\rho)}(u)
&=&\int_{M}\int_{M}(u(x)-u(y))^{2}1_{\{d(x,y)>\rho \}}dj(x,y) \\
&\leq &\int_{M}\int_{M}4u^{2}(x)1_{\{d(x,y)>\rho \}}dj(x,y)\leq
4\int_{M}u^{2}(x)d\mu (x)\sup_{x\in M}\int_{B(x,\rho )^{c}}J(x,dy) \\
&\leq &C\rho ^{-\beta }\Vert u\Vert _{2}^{2}.
\end{eqnarray*}%
({ Note} that $\mathcal{F}^{(\rho )}=\mathcal{F}$ by this inequality). Plugging
this into (\ref{3.1}), we obtain (\ref{Nash}).
\end{proof}

We need the following.

\begin{proposition}
\label{P1}Let $B$ be any ball of radius $r>0$. If $0<\rho \leq r$, then
\begin{equation}
\mathcal{E}^{(\rho)}(e^{-\psi }f,e^{\psi }g)=\mathcal{E}^{(\rho)}(f,g)
\label{11}
\end{equation}%
for any $f,g\in \mathcal{F}^{(\rho )}\cap L^{\infty }$, where $\psi =\lambda
\mathbf{1}_{B}$ with $\lambda \in
\mathbb{R}
$.
\end{proposition}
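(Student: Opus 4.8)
My plan is to deduce the identity (\ref{11}) from a pointwise identity between the integrands of $\mathcal{E}^{(\rho)}(e^{-\psi}f,e^{\psi}g)$ and $\mathcal{E}^{(\rho)}(f,g)$ in (\ref{eq:DF_q}); that pointwise identity will be an immediate consequence of the ultra-metric structure.

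First I would record the geometric fact that does all the work: if $B$ is a ball of radius $r$ and $0<\rho\le r$, then for every $x\in M$ the ball $B(x,\rho)$ is either contained in $B$ or disjoint from $B$. Indeed, any two balls are nested or disjoint, so the only alternative to exclude is $B\subsetneq B(x,\rho)$; but if $B\subseteq B(x,\rho)$, then picking any $z\in B$ and using that in an ultra-metric space a ball coincides with the ball of the same radius about any of its points, one gets $B\subseteq B(x,\rho)=B(z,\rho)\subseteq B(z,r)=B$, so $B(x,\rho)=B$, which is the "contained" case. Hence: if $x\in B$ then $x\in B(x,\rho)\cap B$, so $B(x,\rho)\subseteq B$; and if $x\notin B$ then $B(x,\rho)$ cannot lie in $B$, so $B(x,\rho)\cap B=\emptyset$.

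Next I would fix $x$ and look at the inner integral over $y\in B(x,\rho)$. By the previous step, for every such $y$ we have $\mathbf{1}_{B}(x)=\mathbf{1}_{B}(y)$ (both $1$ if $x\in B$, both $0$ otherwise), hence $\psi(x)=\psi(y)$ throughout the region of integration. Consequently
\[
e^{-\psi(x)}f(x)-e^{-\psi(y)}f(y)=e^{-\psi(x)}\bigl(f(x)-f(y)\bigr),\qquad e^{\psi(x)}g(x)-e^{\psi(y)}g(y)=e^{\psi(x)}\bigl(g(x)-g(y)\bigr),
\]
and multiplying these two identities and using $e^{-\psi(x)}e^{\psi(x)}=1$ shows that the integrand of $\mathcal{E}^{(\rho)}(e^{-\psi}f,e^{\psi}g)$ equals $(f(x)-f(y))(g(x)-g(y))$, the integrand of $\mathcal{E}^{(\rho)}(f,g)$, for $j$-almost every $(x,y)$ with $y\in B(x,\rho)$. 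Integrating against $dj(x,y)$ over $\{(x,y):y\in B(x,\rho)\}$ then gives (\ref{11}).

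Finally I would dispose of the well-definedness of both sides: $\psi$ is bounded, so $e^{\pm\psi}$ is bounded above and bounded away from $0$, whence $e^{-\psi}f,e^{\psi}g\in L^\infty$; moreover $e^{\pm\psi}f=f+(e^{\pm\lambda}-1)\mathbf{1}_{B}f$ lies in $\mathcal{F}^{(\rho)}\cap L^\infty$ since $\mathbf{1}_{B}\in\mathcal{F}^{(\rho)}$ and $\mathcal{F}^{(\rho)}\cap L^\infty$ is an algebra (similarly for $g$); alternatively one first proves (\ref{11}) for $f,g$ in the dense class $\mathcal{D}$ and passes to the limit in the $\sqrt{\mathcal{E}^{(\rho)}(\cdot)+\|\cdot\|_2^2}$-norm, using (\ref{11}) itself to control $\mathcal{E}^{(\rho)}(e^{\pm\psi}(f_n-f))=\mathcal{E}^{(\rho)}(f_n-f)$. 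I do not expect any real obstacle here: the entire content is the ultra-metric dichotomy "$B(x,\rho)\subseteq B$ or $B(x,\rho)\cap B=\emptyset$", which forces the conjugating weight $e^{\psi}$ to be constant on every ball of radius $\le r$, so the extra gradient-type cross terms that would obstruct such an identity on a general metric space vanish identically. The only things to keep straight are the two cases $x\in B$ / $x\notin B$ and the routine verification that $e^{\pm\psi}$ preserves $\mathcal{F}^{(\rho)}\cap L^\infty$.
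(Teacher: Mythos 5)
Your proof is correct and follows essentially the same route as the paper's: the whole argument rests on the ultra-metric dichotomy that for $0<\rho\le r$ each ball $B(x,\rho)$ is either contained in $B$ or disjoint from it, so $\psi$ is constant on every $B(x,\rho)$ and the factors $e^{\pm\psi}$ cancel in the integrand; the paper phrases this by splitting the outer integral into $\int_{B}+\int_{B^{c}}$ while you state it as a pointwise identity, and your domain verification via $e^{\pm\psi}f=f+(e^{\pm\lambda}-1)\mathbf{1}_{B}f$ matches the paper's appeal to the Markov/algebra property.
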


\begin{proof}
Since $\psi =\lambda \mathbf{1}_{B}\in \mathcal{F}$, we see that $e^{\psi
}-1\in \mathcal{F}^{(\rho )}\cap L^{\infty }$ by using the Markov property
of $(\mathcal{E}^{(\rho )},\mathcal{F}^{(\rho )})$, and hence both functions
$e^{\psi }g$ and $e^{-\psi }g$ belong to $\mathcal{F}^{(\rho )}\cap
L^{\infty }$ if $g\in \mathcal{F}^{(\rho )}\cap L^{\infty }$.

Since $0<\rho \leq r$, we see that $B(x,\rho )\subset B(x,r)=B$ if $x\in B$,
whereas $B(x,\rho )\subset B^{c}$ if $x\in B^{c}$ because two balls $%
B(x,\rho )$ and $B$ are disjoint by using the ultra-metric property. Thus%
\begin{eqnarray*}
\psi (x) &=&\psi (y)=\lambda \text{ \ if }x\in B,y\in B(x,\rho ), \\
\psi (x) &=&\psi (y)=0\text{ \ if }x\in B^{c},y\in B(x,\rho ).
\end{eqnarray*}%
It follows that%
\begin{eqnarray*}
\mathcal{E}^{(\rho)}(e^{-\psi }f,e^{\psi }g) &=&\int_{M}\dint_{B(x,\rho
)}(e^{-\psi (x)}f(x)-e^{-\psi (y)}f(y))(e^{\psi (x)}g(x)-e^{\psi (y)}g(y))dj
\\
&=&\int_{B}\dint_{B(x,\rho )}+\int_{B^{c}}\dint_{B(x,\rho )}\cdots \\
&=&\int_{B}\dint_{B(x,\rho )}(e^{-\lambda }f(x)-e^{-\lambda
}f(y))(e^{\lambda }g(x)-e^{\lambda }g(y))dj \\
&&+\int_{B^{c}}\dint_{B(x,\rho )}(f(x)-f(y))(g(x)-g(y))dj \\
&=&\int_{M}\dint_{B(x,\rho )}(f(x)-f(y))(g(x)-g(y))dj=\mathcal{E}^{(\rho)}(f,g),
\end{eqnarray*}%
thus proving (\ref{11}).
\end{proof}

\begin{corollary}
\label{C1}Let $B$ be any ball of radius $r>0$. If $0<\rho \leq r$, then
\begin{equation}
\mathcal{E}^{(\rho)}(e^{-\psi }f,e^{\psi }f^{2p-1})\geq \frac{1}{p}\mathcal{E}^{(\rho)}(f^{p})  \label{12}
\end{equation}%
for any $f\in \mathcal{F}^{(\rho )}\cap L^{\infty }$ and any $p\geq 1$,
where $\psi =\lambda \mathbf{1}_{B}$ with $\lambda \in
\mathbb{R}
$.
\end{corollary}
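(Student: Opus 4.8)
The plan is to deduce (\ref{12}) from Proposition \ref{P1} together with an elementary pointwise inequality. We may assume $f\geq 0$ (the only case that is needed, and the one under which the powers $f^{p},f^{2p-1}$ are unambiguously defined when $p$ is not an integer).

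First, I observe that $f^{p}$ and $f^{2p-1}$ lie in $\mathcal{F}^{(\rho)}\cap L^{\infty}$: since $f$ is bounded and the maps $t\mapsto t^{p}$, $t\mapsto t^{2p-1}$ are Lipschitz on $[0,\|f\|_{\infty}]$ and vanish at $0$, this follows from the Markov property of $(\mathcal{E}^{(\rho)},\mathcal{F}^{(\rho)})$ exactly as in the proof of Proposition \ref{P1}. Applying Proposition \ref{P1} with $g=f^{2p-1}$ therefore gives
\[
\mathcal{E}^{(\rho)}(e^{-\psi}f,e^{\psi}f^{2p-1})=\mathcal{E}^{(\rho)}(f,f^{2p-1}),
\]
so that (\ref{12}) reduces to $\mathcal{E}^{(\rho)}(f,f^{2p-1})\geq \tfrac{1}{p}\mathcal{E}^{(\rho)}(f^{p})$, an inequality that no longer involves $\psi$ or the ball $B$.

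Second, writing both energies as integrals over $\{(x,y):y\in B(x,\rho)\}$ against the non-negative measure $dj$, it is enough to establish the pointwise inequality
\[
(a-b)\bigl(a^{2p-1}-b^{2p-1}\bigr)\geq \tfrac{1}{p}\bigl(a^{p}-b^{p}\bigr)^{2}\qquad\text{for all }a,b\geq 0,\ p\geq 1,
\]
and then insert $a=f(x)$, $b=f(y)$ and integrate. To prove it, assume $a\geq b\geq 0$ and use $a^{q}-b^{q}=q\int_{b}^{a}t^{q-1}\,dt$; by the Cauchy--Schwarz inequality,
\[
\bigl(a^{p}-b^{p}\bigr)^{2}=p^{2}\Bigl(\int_{b}^{a}t^{p-1}\,dt\Bigr)^{2}\leq p^{2}(a-b)\int_{b}^{a}t^{2p-2}\,dt=\frac{p^{2}}{2p-1}\,(a-b)\bigl(a^{2p-1}-b^{2p-1}\bigr),
\]
and since $p\geq 1$ forces $p^{2}/(2p-1)\leq p$, the right-hand side is at most $p\,(a-b)(a^{2p-1}-b^{2p-1})$; dividing by $p$ gives the claim.

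I do not anticipate a real obstacle here: the mathematical content is the convexity-type inequality of the second step, which is elementary, and the only point needing a word of justification is the membership $f^{p},f^{2p-1}\in\mathcal{F}^{(\rho)}\cap L^{\infty}$, handled just as the corresponding step in Proposition \ref{P1}.
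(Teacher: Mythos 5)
Your proof is correct and follows essentially the same route as the paper: reduce via Proposition \ref{P1} with $g=f^{2p-1}$ to the identity $\mathcal{E}^{(\rho)}(e^{-\psi}f,e^{\psi}f^{2p-1})=\mathcal{E}^{(\rho)}(f,f^{2p-1})$, then apply the pointwise inequality $(a-b)(a^{2p-1}-b^{2p-1})\geq \frac{2p-1}{p^{2}}(a^{p}-b^{p})^{2}\geq\frac{1}{p}(a^{p}-b^{p})^{2}$ with $a=f(x)$, $b=f(y)$. The only difference is that you also supply a (correct) Cauchy--Schwarz proof of the elementary inequality, which the paper merely cites.
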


\begin{proof}
Using the elementary inequality%
\begin{equation*}
(a-b)(a^{2p-1}-b^{2p-1})\geq \frac{2p-1}{p^{2}}(a^{p}-b^{p})^{2}\geq \frac{1%
}{p}(a^{p}-b^{p})^{2}
\end{equation*}%
for any non-negative numbers $a,b$ and any $p\geq 1$, and taking $%
g=f^{2p-1}$ for any $p\geq 1$ in (\ref{11}), we obtain by letting $%
a=f(x),b=f(y)$,%
\begin{eqnarray*}
\mathcal{E}^{(\rho)}(e^{-\psi }f,e^{\psi }f^{2p-1}) &=&\mathcal{E}^{(\rho)}(f,f^{2p-1})
=\dint_{M\times B(x,\rho)}(f(x)-f(y))(f^{2p-1}(x)-f^{2p-1}(y))dj \\
&\geq &\frac{1}{p}\dint_{M\times B(x,\rho )}(f^{p}(x)-f^{p}(y))^{2}dj=\frac{1%
}{p}\mathcal{E}^{(\rho)}(f^{p}),
\end{eqnarray*}%
thus proving (\ref{12}).
\end{proof}

\begin{remark}
\RM Inequality (\ref{12}) is an enhancement of the previous similar results
in \cite[formula (3.11)]{CarlenKusuokaStroock.1987.AIHPPS245}, \cite[formula
(2.8)]{MuruganSaloff.2015.dav} (see also \cite[formula (3.11)]{HuLi.2018})
in the setting of the ultra-metric space.
\end{remark}

For any { $f\in \mathcal{F}\cap L^{\infty }$ with $\|f\|_2=1$}. Denote by $\{Q_{t}\}_{t\geq 0}$ the heat semigroup of $(\mathcal{E}^{(\rho
)},\mathcal{F}^{(\rho )})$. Let%
\begin{equation}
f_{t}:=Q_{t}^{\psi }f:=e^{{ \psi} }Q_{t}(e^{{ -\psi} }f)  \label{ft}
\end{equation}%
be the \emph{perturbed} semigroup\ of $\{Q_{t}\}_{t\geq 0}$.

\begin{proposition}
\label{P2}Let $B$ be any ball of radius $r>0$. If $0<\rho \leq r$, then%
\begin{equation}
\frac{d}{dt}\left\Vert f_{t}\right\Vert _{2p}\leq -\frac{C_{N}^{-1}}{p}%
\left\Vert f_{t}\right\Vert _{2p}^{1+2p\nu }\left\Vert f_{t}\right\Vert
_{p}^{-2p\nu }+\frac{K_{0}}{p}\left\Vert f_{t}\right\Vert _{2p}  \label{13}
\end{equation}%
for any non-negative ${ f_t}\in \mathcal{F}^{(\rho )}\cap L^{\infty }$ and any $%
p\geq 1$, where $\psi =\lambda \mathbf{1}_{B}$ with $\lambda \in
\mathbb{R}
$ as before, and $C_{N}$ is the same as in (\ref{Nash}).
\end{proposition}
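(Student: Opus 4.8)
The plan is to run the standard Davies differential-inequality argument for $\|f_t\|_{2p}$: first use Corollary~\ref{C1} to turn $\tfrac{d}{dt}\|f_t\|_{2p}^{2p}$ into a multiple of $-\mathcal E^{(\rho)}(f_t^{p})$, then use the Nash inequality (\ref{Nash}) to bound $\mathcal E^{(\rho)}(f_t^{p})$ from below. As is implicit in the statement, take $f\ge 0$ (and $f\not\equiv 0$, since otherwise (\ref{13}) is vacuous); then $e^{-\psi}f\ge 0$, and by the Markov property of $(\mathcal E^{(\rho)},\mathcal F^{(\rho)})$ both $u_t:=Q_t(e^{-\psi}f)$ and $f_t=e^{\psi}u_t$ are non-negative, with $\|f_t\|_{2p}>0$. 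Since $\psi=\lambda\mathbf 1_{B}\in\mathcal F=\mathcal F^{(\rho)}$, the argument in the proof of Proposition~\ref{P1} gives $e^{\pm\psi}-1\in\mathcal F^{(\rho)}\cap L^{\infty}$, and as $\mathcal F^{(\rho)}\cap L^{\infty}$ is an algebra, $e^{-\psi}f\in\mathcal F^{(\rho)}\cap L^{\infty}$; hence $u_t$, $f_t$, $f_t^{2p-1}$ and $e^{\psi}f_t^{2p-1}$ all lie in $\mathcal F^{(\rho)}\cap L^{\infty}$ for every $t\ge 0$, and for $t>0$ the function $u_t$ belongs to the domain of the generator $\mathcal L^{(\rho)}$ of $(\mathcal E^{(\rho)},\mathcal F^{(\rho)})$.

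Differentiating $\|f_t\|_{2p}^{2p}=\int_M e^{2p\psi}u_t^{2p}\,d\mu$ under the integral sign (legitimate as $\psi,u_t$ are bounded and $u_t,\tfrac{d}{dt}u_t=-\mathcal L^{(\rho)}u_t$ are controlled in $L^{2}$), and using $e^{2p\psi}u_t^{2p-1}\in\mathcal F^{(\rho)}$ together with $u_t$ in the domain of $\mathcal L^{(\rho)}$, one obtains
\begin{equation*}
\frac{d}{dt}\|f_t\|_{2p}^{2p}=2p\int_M e^{2p\psi}u_t^{2p-1}\,\frac{d}{dt}u_t\,d\mu=-2p\,\mathcal E^{(\rho)}\bigl(e^{2p\psi}u_t^{2p-1},\,u_t\bigr).
\end{equation*}
Since $e^{2p\psi}u_t^{2p-1}=e^{\psi}(e^{\psi}u_t)^{2p-1}=e^{\psi}f_t^{2p-1}$ and $u_t=e^{-\psi}f_t$, Corollary~\ref{C1} applied to the non-negative function $f_t$ yields
\begin{equation*}
\frac{d}{dt}\|f_t\|_{2p}^{2p}=-2p\,\mathcal E^{(\rho)}\bigl(e^{-\psi}f_t,\,e^{\psi}f_t^{2p-1}\bigr)\le-2\,\mathcal E^{(\rho)}(f_t^{p}).
\end{equation*}

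Next I would apply (\ref{Nash}) to $u=f_t^{p}$ (which is in $\mathcal F\cap L^{1}$ once $f\in L^{1}$, the case relevant for the method). Using $\|f_t^{p}\|_2^{2}=\|f_t\|_{2p}^{2p}$ and $\|f_t^{p}\|_1=\|f_t\|_p^{p}$, rearranging (\ref{Nash}) gives
\begin{equation*}
\mathcal E^{(\rho)}(f_t^{p})\ge C_{N}^{-1}\|f_t\|_{2p}^{2p(1+\nu)}\|f_t\|_p^{-2p\nu}-K_{0}\|f_t\|_{2p}^{2p},
\end{equation*}
so that
\begin{equation*}
\frac{d}{dt}\|f_t\|_{2p}^{2p}\le-2C_{N}^{-1}\|f_t\|_{2p}^{2p(1+\nu)}\|f_t\|_p^{-2p\nu}+2K_{0}\|f_t\|_{2p}^{2p}.
\end{equation*}
Finally, $\tfrac{d}{dt}\|f_t\|_{2p}=\tfrac{1}{2p}\|f_t\|_{2p}^{1-2p}\,\tfrac{d}{dt}\|f_t\|_{2p}^{2p}$ together with the exponent identities $1-2p+2p(1+\nu)=1+2p\nu$ and $1-2p+2p=1$ turns the last display into precisely (\ref{13}).

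The only part that is more than bookkeeping is the content of the first two paragraphs: checking that every argument of $\mathcal E^{(\rho)}$ lies in $\mathcal F^{(\rho)}\cap L^{\infty}$ so that Corollary~\ref{C1} may be invoked, and justifying the differentiation of $t\mapsto\|f_t\|_{2p}^{2p}$ along with the identity $\tfrac{d}{dt}\int_M e^{2p\psi}u_t^{2p}\,d\mu=-2p\,\mathcal E^{(\rho)}(e^{2p\psi}u_t^{2p-1},u_t)$ via the spectral calculus of the self-adjoint semigroup $\{Q_t\}$. Everything after that is algebra, and the sign condition $f\ge 0$ is used only through Corollary~\ref{C1} and the identity $\|f_t^{p}\|_1=\|f_t\|_p^{p}$.
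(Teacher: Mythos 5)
Your proposal is correct and follows essentially the same route as the paper: differentiate $\|f_t\|_{2p}^{2p}$, pass to $-2p\,\mathcal E^{(\rho)}(e^{\psi}f_t^{2p-1},e^{-\psi}f_t)$ via the generator, invoke Corollary~\ref{C1} to get $-2\,\mathcal E^{(\rho)}(f_t^{p})$, apply the Nash inequality (\ref{Nash}) to $f_t^{p}$, and divide by $2p\|f_t\|_{2p}^{2p-1}$. Your extra care with the domain memberships and the $L^{1}$ hypothesis needed for (\ref{Nash}) only makes explicit what the paper leaves implicit.
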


\begin{proof}
Note that $f_{t}\in \mathcal{F}^{(\rho )}\cap L^{\infty }$. Then, using (\ref%
{12}) with $f$ being replaced by $f_{t}$,%
\begin{eqnarray}
\frac{d}{dt}\left\Vert f_{t}\right\Vert _{2p}^{2p} &=&2p\int f_{t}^{2p-1}%
\frac{\partial f_{t}}{\partial t}d\mu =2p\left(f_{t}^{2p-1},e^{\psi }\frac{%
\partial }{\partial t}Q_{t}( e^{-\psi }f) \right)  \notag \\
&=&2p\left(e^{\psi }f_{t}^{2p-1},\mathcal{L}Q_{t}( e^{-\psi }f ) \right)=-2p%
\mathcal{E}^{(\rho)}(e^{\psi }f_{t}^{2p-1},e^{-\psi }f_{t})\leq -2\mathcal{E}^{(\rho)}(f_{t}^{p}).  \label{14}
\end{eqnarray}%
On the other hand, applying the Nash inequality (\ref{Nash}) with $u$ being
replaced by $f_{t}^{p}$, we see that%
\begin{equation*}
-\mathcal{E}^{(\rho)}(f_{t}^{p})\leq -C_{N}^{-1}\Vert f_{t}\Vert
_{2p}^{2p(1+\nu )}\left\Vert f_{t}\right\Vert _{p}^{-2p\nu }+K_{0}\Vert
f_{t}\Vert _{2p}^{2p}.
\end{equation*}%
Plugging this into (\ref{14}), we obtain%
\begin{equation*}
2p\left\Vert f_{t}\right\Vert _{2p}^{2p-1}\frac{d}{dt}\left\Vert
f_{t}\right\Vert _{2p}=\frac{d}{dt}\left\Vert f_{t}\right\Vert
_{2p}^{2p}\leq -2C_{N}^{-1}\Vert f_{t}\Vert _{2p}^{2p(1+\nu )}\left\Vert
f_{t}\right\Vert _{p}^{-2p\nu }+2K_{0}\Vert f_{t}\Vert _{2p}^{2p},
\end{equation*}%
which gives (\ref{13}) after dividing $2p\left\Vert f_{t}\right\Vert
_{2p}^{2p-1}$ on the both sides.
\end{proof}

For any integer $k\geq 1$ we define the function $w_{k}(t)$ for $t>0$ by
\begin{equation}
w_{k}(t):=\underset{s\in (0,t]}{\sup }\left\{ s^{(2^{k-1}-1)/(2^{k}\nu
)}\left\Vert f_{s}\right\Vert _{2^{k}}\right\} .  \label{wkt}
\end{equation}%
Clearly, $w_{k}(t)$ is non-decreasing in $t>0$. Note that by (\ref{13}) with
$p=1$,%
\begin{equation*}
\frac{d}{dt}\left\Vert f_{t}\right\Vert _{2}\leq K_{0}\left\Vert
f_{t}\right\Vert _{2}
\end{equation*}%
and thus
\begin{equation*}
\left\Vert f_{t}\right\Vert _{2}\leq \exp (K_{0}t)\left\Vert f\right\Vert
_{2}.
\end{equation*}%
This gives that for any $t>0$%
\begin{equation}
w_{1}(t){ =}\underset{s\in (0,t]}{\sup }\left\{ \left\Vert f_{s}\right\Vert
_{2}\right\} \leq \underset{s\in (0,t]}{\sup }\left\{ \exp
(K_{0}s)\left\Vert f\right\Vert _{2}\right\} \leq \exp (K_{0}t)\left\Vert
f\right\Vert _{2}.  \label{15}
\end{equation}%
We will estimate $w_{k}$ by iteration.

\begin{proposition}
\label{P3}Let $w_{k}$ be defined as in (\ref{wkt}). Then for any
non-negative $f\in \mathcal{F}^{(\rho )}\cap L^{\infty }$%
\begin{equation}
w_{k+1}(t)\leq C_{1}\exp (2K_{0}t)\left\Vert f\right\Vert _{2}  \label{17}
\end{equation}%
for any $t>0$, where $C_{1}$ is some universal constant depending only on $%
\nu ,C_{N}$ (but independent of $t,\rho ,r$ and $\lambda $), and $K_{0}=\rho
^{-\beta }+R_{0}^{-\beta }$ as before.
\end{proposition}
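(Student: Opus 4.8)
The plan is to run the Nash--Moser iteration on the family $w_k$, after first removing the linear term from (\ref{13}) by a time-dependent rescaling. First I would set $\bar f_s:=e^{-K_0 s}f_s$, which is again non-negative and lies in $\mathcal{F}^{(\rho)}\cap L^\infty$. Since every $L^q$-norm of $\bar f_s$ equals that of $f_s$ times the common factor $e^{-K_0 s}$ and the right-hand side of (\ref{13}) is positively homogeneous of degree $1$ in $f_s$, differentiating and invoking (\ref{13}) gives
\[
\frac{d}{ds}\|\bar f_s\|_{2p}\le -\frac{C_N^{-1}}{p}\|\bar f_s\|_{2p}^{1+2p\nu}\|\bar f_s\|_{p}^{-2p\nu}+\Big(\frac{K_0}{p}-K_0\Big)\|\bar f_s\|_{2p},
\]
and the last term is $\le 0$ for $p\ge 1$, so $\bar f_s$ obeys the clean inequality $\frac{d}{ds}\|\bar f_s\|_{2p}\le -\frac{C_N^{-1}}{p}\|\bar f_s\|_{2p}^{1+2p\nu}\|\bar f_s\|_{p}^{-2p\nu}$. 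Writing $\bar w_k(t):=\sup_{s\in(0,t]}\{s^{(2^{k-1}-1)/(2^k\nu)}\|\bar f_s\|_{2^k}\}$ and using $\|f_s\|_{2^k}=e^{K_0 s}\|\bar f_s\|_{2^k}$, one gets $w_k(t)\le e^{K_0 t}\bar w_k(t)$; moreover $\bar w_1(t)=\sup_{s\le t}\|\bar f_s\|_2\le\|f\|_2$ by (\ref{15}) (the weight exponent of $w_1$ being zero). Hence it suffices to prove $\bar w_{k+1}(t)\le C_1\|f\|_2$ with $C_1$ depending only on $\nu$ and $C_N$, whereupon (\ref{17}) follows from $w_{k+1}(t)\le e^{K_0t}\bar w_{k+1}(t)\le e^{2K_0t}\bar w_{k+1}(t)$.

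For the inductive step I would fix $t>0$ and $k\ge 1$, write $a_k:=(2^{k-1}-1)/(2^k\nu)$, $\theta:=2^{k+1}\nu$, $U(s):=\|\bar f_s\|_{2^{k+1}}$. By definition of $\bar w_k$ we have $\|\bar f_s\|_{2^k}\le s^{-a_k}\bar w_k(t)$ on $(0,t]$; inserting this into the clean inequality with $2p=2^{k+1}$ yields, on $(0,t]$,
\[
U'(s)\le -A\,s^{m}\,U(s)^{1+\theta},\qquad m:=\theta a_k=2^k-2,\quad A:=\frac{C_N^{-1}}{2^k}\,\bar w_k(t)^{-\theta}.
\]
This is a Bernoulli inequality: $\frac{d}{ds}U(s)^{-\theta}\ge\theta A\,s^{m}$, and integrating from $0$ to $s$ (the boundary contribution at $0$ is non-negative, since $m+1=2^k-1\ge1$, hence dropped) gives $U(s)\le\big(\tfrac{m+1}{\theta A}\big)^{1/\theta}s^{-(m+1)/\theta}$. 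Since $(m+1)/\theta=(2^k-1)/(2^{k+1}\nu)$ is exactly the weight exponent appearing in $\bar w_{k+1}$, taking the supremum over $s\in(0,t]$ produces
\[
\bar w_{k+1}(t)\le b_k\,\bar w_k(t),\qquad b_k:=\Big(\frac{(2^k-1)\,2^k\,C_N}{2^{k+1}\nu}\Big)^{1/(2^{k+1}\nu)}.
\]

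Finally, iterating this bound down to $k=1$ gives $\bar w_{k+1}(t)\le\big(\prod_{j=1}^{k}b_j\big)\bar w_1(t)\le\big(\prod_{j=1}^{\infty}b_j\big)\|f\|_2$, and the infinite product converges because $\log b_j=\frac{1}{2^{j+1}\nu}\log\frac{(2^j-1)2^jC_N}{2^{j+1}\nu}=O(j\,2^{-j})$ is summable; the constant $C_1:=\prod_{j\ge1}b_j$ depends only on $\nu$ and $C_N$. Together with $w_{k+1}(t)\le e^{2K_0t}\bar w_{k+1}(t)$ this is (\ref{17}). The substantive points are the degree-$1$ homogeneity, which simultaneously kills the linear term and keeps $C_1$ free of $t,\rho,r,\lambda$, and the two exponent identities $\theta a_k=2^k-2$ and $(m+1)/\theta=(2^k-1)/(2^{k+1}\nu)$ that make the iteration close up; the convergence of $\prod b_j$ and the $\varepsilon\to0$ limit in the ODE integration are routine, and the $e^{2K_0t}$ in the statement is a comfortable overestimate of the $e^{K_0t}$ that actually comes out.
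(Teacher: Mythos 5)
Your proof is correct and reaches the same conclusion by the same Moser-iteration skeleton on the weighted norms $w_k$, but it handles the troublesome linear term $\frac{K_0}{p}\Vert f_t\Vert_{2p}$ in (\ref{13}) differently. The paper keeps that term and invokes the Carlen--Kusuoka--Stroock ODE lemma (Proposition \ref{P9} of the Appendix) at each stage, so the factors $\exp(K_0 2^{-k}t)$ accumulate over the iteration into the final $\exp(2K_0t)$. You instead kill the linear term once and for all by the substitution $\bar f_s=e^{-K_0 s}f_s$: since the nonlinear term $\Vert f_s\Vert_{2p}^{1+2p\nu}\Vert f_s\Vert_p^{-2p\nu}$ is homogeneous of degree one, the rescaled quantity satisfies a clean Bernoulli inequality $U'\le -A s^m U^{1+\theta}$ that you integrate by hand, and your exponent bookkeeping ($\theta a_k=2^k-2$, $(m+1)/\theta=(2^k-1)/(2^{k+1}\nu)$) matches the paper's. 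What your route buys is the elimination of the appendix lemma and a cleaner iteration inequality $\bar w_{k+1}\le b_k\bar w_k$ with $t$-independent $b_k$; what it costs is nothing essential, since the degree-one homogeneity you exploit is exactly what makes the rescaling harmless. Two small points to tidy up: (i) the step $\prod_{j=1}^{k}b_j\le\prod_{j=1}^{\infty}b_j$ is only valid if every $b_j\ge 1$, which need not hold for all $\nu,C_N$; you should set $C_1:=\prod_{j\ge1}\max\{1,b_j\}$ (finite because $\sum_j|\log b_j|<\infty$), exactly as the paper takes $\max\{1,D\}$; (ii) you implicitly assume, as the paper also does, the differentiability of $s\mapsto\Vert f_s\Vert_{2p}$ and the validity of the ODE comparison down to $s=0$, which is standard in Davies-method arguments and not a defect specific to your write-up.
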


\begin{proof}
For any integer $k\geq 1$, denote by%
\begin{equation*}
u_{k}(s)=\left\Vert f_{s}\right\Vert _{2^{k}}\text{ \ (}s>0\text{)}
\end{equation*}%
for simplicity. Applying (\ref{13}) with $p=2^{k}$, we have
\begin{equation*}
u_{k+1}^{^{\prime }}(s)\leq -\frac{C_{N}^{-1}}{2^{k}}u_{k+1}(s)^{1+2^{k+1}%
\nu }\cdot u_{k}(s)^{-2^{k+1}\nu }+\frac{K_{0}}{2^{k}}u_{k+1}(s)\text{\ (}s>0%
\text{).}
\end{equation*}%
Since $w_{k}(s)\geq s^{(2^{k-1}-1)/(2^{k}\nu )}u_{k}(s)$ by definition (\ref%
{wkt}), we obtain%
\begin{eqnarray*}
u_{k+1}^{^{\prime }}(s) &\leq &-\frac{C_{N}^{-1}}{2^{k}}%
u_{k+1}(s)^{1+2^{k+1}\nu }\cdot \left\{ \frac{s^{(2^{k-1}-1)/(2^{k}\nu )}}{%
w_{k}(s)}\right\} ^{2^{k+1}\nu }+\frac{K_{0}}{2^{k}}u_{k+1}(s) \\
&=&-C_{N}^{-1}2^{-k}\frac{s^{2^{k}-2}}{w_{k}(s)^{2^{k+1}\nu }}%
u_{k+1}(s)^{1+2^{k+1}\nu }+K_{0}2^{-k}u_{k+1}(s).
\end{eqnarray*}%
Applying Proposition \ref{P9} in Appendix with $u(s)=u_{k+1}(s),\theta
=2^{k+1}\nu $ and $b=C_{N}^{-1}2^{-k},p=2^{k},w=w_{k},$ and $%
K=K_{0}2^{-k},a=1$, we obtain that%
\begin{eqnarray*}
u_{k+1}(s) &\leq &\left\{ \frac{2}{2^{k+1}\nu }\cdot \frac{2^{k}}{%
C_{N}^{-1}2^{-k}}\right\} ^{1/(2^{k+1}\nu )}s^{-(2^{k}-1)/(2^{k+1}\nu )}\exp
\left\{ K_{0}{ 4^{-k}}s\right\} w_{k}(s) \\
&=&\left\{ \frac{2^{k}}{C_{N}^{-1}\nu }\right\} ^{1/(2^{k+1}\nu
)}s^{-(2^{k}-1)/(2^{k+1}\nu )}\exp \left\{K_{0} { 4^{-k}}s\right\} w_{k}(s),
\end{eqnarray*}%
that is,%
\begin{equation*}
s^{(2^{k+1}-2)/(2^{k+2}\nu )}u_{k+1}(s)\leq \left\{ \frac{2^{k}}{%
C_{N}^{-1}\nu }\right\} ^{1/(2^{k+1}\nu )}\exp \left\{ K_{0}{ 4^{-k}}s\right\}
w_{k}(s)
\end{equation*}%
for any $s>0$.

Therefore, for any $t>0$,$k>0$,
\begin{eqnarray}
w_{k+1}(t) &=&\sup_{s\in (0,t]}\left\{ s^{(2^{k+1}-2)/(2^{k+2}\nu
)}u_{k+1}(s)\right\} \leq \left\{ \frac{2^{k}}{C_{N}^{-1}\nu }\right\}
^{1/(2^{k+1}\nu )}\sup_{s\in (0,t]}\left\{ \exp \left\{ K_{0}{ 4^{-k}}s\right\}
w_{k}(s)\right\}  \notag \\
&\leq &\left\{ \frac{2^{k}}{C_{N}^{-1}\nu }\right\} ^{1/(2^{k+1}\nu )}\exp
\left\{ K_{0}2^{-k}t\right\} w_{k}(t) { =:} (Da^{k})^{2^{-k}}w_{k}(t)  \label{16}
\end{eqnarray}%
where $D:=(C_{N}^{-1}\nu )^{-1/(2\nu )}\exp \left( K_{0}t\right) $ and $%
a:=2^{1/(2\nu )}>1$. { For the second inequality, we use $2^{-k}\leq1$.}

By iteration, we see from (\ref{16}), (\ref{15}) that for any $k\geq 1$%
\begin{eqnarray*}
w_{k+1}(t) &\leq &(Da^{k})^{2^{-k}}w_{k}(t)\leq (Da^{k})^{2^{-k}}\cdot
(Da^{k-1})^{2^{-(k-1)}}w_{k-1}(t)\leq \cdots \\
&\leq &D^{\frac{1}{2^{k}}+\frac{1}{2^{k-1}}+\cdots +\frac{1}{2}}a^{\frac{k}{%
2^{k}}+\frac{k-1}{2^{k-1}}+\cdots +\frac{1}{2}}w_{1}(t) \\
&\leq &\max \left\{ 1,D\right\} a^{2}w_{1}(t)\leq C_{1}\exp \left\{
K_{0}t\right\} w_{1}(t) \\
&\leq &C_{1}\exp \left( 2K_{0}t\right) \left\Vert f\right\Vert _{2},
\end{eqnarray*}%
where $C_{1}=\max \left\{ 1,(C_{N}^{-1}\nu )^{-1/(2\nu )}\right\} a^{2}$,
thus proving (\ref{17}).
\end{proof}

We now estimate the heat kernel $q_{t}^{(\rho )}(x,y)$ of the truncated
Dirichlet form $(\mathcal{E}^{(\rho )},\mathcal{F}^{(\rho )})$.

\begin{lemma}
\label{L1}Assume that conditions $\left( TJ\right) ,(DUE)$ hold. Then for
any ball $B$ of radius $r\in (0,R_{0})$ and for any $0<\rho \leq r$, the
heat kernel $q_{t}^{(\rho )}(x,y)$ of $(\mathcal{E}^{(\rho )},\mathcal{F}%
^{(\rho )})$ exists and satisfies%
\begin{equation}
q_{t}^{(\rho )}(x,y)=0  \label{22}
\end{equation}%
for $\mu $-almost all $x\in B,y\in B^{c}$ and for all $t>0$.
\end{lemma}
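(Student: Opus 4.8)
The plan is to run the Davies iteration of Proposition~\ref{P3} all the way to $k=\infty$, turn the resulting weighted $L^{2}\to L^{\infty}$ bound for the perturbed semigroup into an $L^{1}\to L^{\infty}$ (i.e.\ pointwise heat kernel) bound by duality and the semigroup property, and then use that the gauge parameter $\lambda$ in $\psi=\lambda\mathbf{1}_{B}$ is free while all constants are independent of it. \textbf{Step 1 (existence).} First take $\psi\equiv 0$ in Proposition~\ref{P3}. In the definition (\ref{wkt}) of $w_{k}$ the exponent $(2^{k-1}-1)/(2^{k}\nu)\to 1/(2\nu)=\alpha/(2\beta)$ and $\|Q_{s}f\|_{2^{k}}\to\|Q_{s}f\|_{\infty}$ as $k\to\infty$, so (\ref{17}) gives $\|Q_{t}f\|_{\infty}\le C_{1}e^{2K_{0}t}t^{-\alpha/(2\beta)}\|f\|_{2}$ for non-negative $f\in\mathcal{F}^{(\rho)}\cap L^{\infty}$; since $\mathcal{D}$ is dense in $L^{2}$ and $Q_{t}$ is positivity preserving this yields $\|Q_{t}\|_{2\to\infty}<\infty$, hence $\|Q_{t}\|_{1\to\infty}<\infty$ by self-adjointness and $Q_{t}=Q_{t/2}Q_{t/2}$, so by the standard kernel theorem the heat kernel $q^{(\rho)}_{t}$ exists as a bounded symmetric function on $M\times M$.

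\textbf{Step 2 (perturbed bound).} Now fix a ball $B$ of radius $r\in(0,R_{0})$, a number $0<\rho\le r$, a $\lambda\in\mathbb{R}$, and put $\psi=\lambda\mathbf{1}_{B}$. Repeating Step~1 with this $\psi$ (legitimate since Propositions~\ref{P1}--\ref{P3} apply for $\rho\le r$) gives $\|Q^{\psi}_{t}\|_{2\to\infty}\le C_{1}e^{2K_{0}t}t^{-\alpha/(2\beta)}$, where $C_{1}$ and $K_{0}=\rho^{-\beta}+R_{0}^{-\beta}$ do \emph{not} depend on $\lambda$. The $L^{2}$-adjoint of $Q^{\psi}_{t}=e^{\psi}Q_{t}(e^{-\psi}\,\cdot\,)$ is $Q^{-\psi}_{t}$ (as $Q_{t}$ is self-adjoint), which obeys the same estimate with $\lambda$ replaced by $-\lambda$; hence by duality $\|Q^{\psi}_{t}\|_{1\to 2}=\|Q^{-\psi}_{t}\|_{2\to\infty}\le C_{1}e^{2K_{0}t}t^{-\alpha/(2\beta)}$. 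Composing the two half-time maps,
\begin{equation*}
\|Q^{\psi}_{t}\|_{1\to\infty}\le\|Q^{\psi}_{t/2}\|_{2\to\infty}\,\|Q^{\psi}_{t/2}\|_{1\to 2}\le C_{1}^{2}\,2^{\alpha/\beta}\,e^{2K_{0}t}\,t^{-\alpha/\beta}.
\end{equation*}
Since the integral kernel of $Q^{\psi}_{t}$ is $e^{\psi(x)-\psi(y)}q^{(\rho)}_{t}(x,y)$, this reads $e^{\psi(x)-\psi(y)}q^{(\rho)}_{t}(x,y)\le C_{1}^{2}2^{\alpha/\beta}e^{2K_{0}t}t^{-\alpha/\beta}$ for $\mu$-almost all $(x,y)$.

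\textbf{Step 3 ($\lambda\to\infty$).} For $x\in B$ and $y\in B^{c}$ we have $\psi(x)-\psi(y)=\lambda$, so $e^{\lambda}q^{(\rho)}_{t}(x,y)\le C_{1}^{2}2^{\alpha/\beta}e^{2K_{0}t}t^{-\alpha/\beta}$ with a right-hand side free of $\lambda$; running $\lambda$ through the positive integers and taking the union of the countably many exceptional null sets forces $q^{(\rho)}_{t}(x,y)=0$ for $\mu$-almost all $x\in B$, $y\in B^{c}$, for each fixed $t>0$. Since $t\mapsto q^{(\rho)}_{t}(x,y)$ is continuous for a.a.\ $(x,y)$, the exceptional set may be taken independent of $t$, which gives (\ref{22}).

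\textbf{Where the real content lies.} The decoupling that makes $e^{\lambda\mathbf{1}_{B}}$ a harmless gauge for $\mathcal{E}^{(\rho)}$ when $\rho\le r$ — the analytic face of the fact that the ultra-metric inequality forbids a jump of length $\le\rho$ from $B$ to $B^{c}$, so $B$ is absorbing — is already packaged in Proposition~\ref{P1} and Corollary~\ref{C1}. Given Proposition~\ref{P3}, the steps above are short, and the only places that need care are the passage $k\to\infty$ in $w_{k}$ and the bookkeeping of $\mu$-null sets as $\lambda\to\infty$ and over $t$; neither is a genuine obstacle.
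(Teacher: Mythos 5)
Your proposal is correct and follows essentially the same route as the paper: iterate Proposition~\ref{P3}, let $k\to\infty$ to get the $L^{2}\to L^{\infty}$ bound on $Q_{t}^{\psi}$, dualize to $L^{1}\to L^{2}$ via $Q_{t}^{-\psi}$, compose the two half-time maps to obtain the $\lambda$-independent kernel bound $e^{\psi(x)-\psi(y)}q_{t}^{(\rho)}(x,y)\leq Ct^{-1/\nu}e^{2K_{0}t}$, and send $\lambda\to\infty$. The only differences are cosmetic: you isolate existence as a preliminary step with $\psi\equiv 0$ and track the null sets more explicitly, both of which the paper leaves implicit.
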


\begin{proof}
Let $0\leq f\in \mathcal{F}\cap L^{\infty}$ with $\left\Vert f\right\Vert _{2}=1$%
. By (\ref{17}), we have%
\begin{equation*}
t^{(2^{k-1}-1)/(2^{k}\nu )}\left\Vert f_{t}\right\Vert _{2^{k}}\leq
C_{1}\exp (2K_{0}t)
\end{equation*}%
which gives by letting $k\rightarrow \infty $ that%
\begin{equation}
\left\Vert Q_{t}^{\psi }f\right\Vert _{\infty }=\left\Vert f_{t}\right\Vert
_{\infty }\leq \frac{C_{1}}{t^{1/(2\nu )}}\exp (2K_{0}t)  \label{19}
\end{equation}%
for any $t>0$. It follows that%
\begin{equation*}
\left\Vert Q_{t}^{\psi }\right\Vert _{2\rightarrow \infty
}:=\sup_{\left\Vert f\right\Vert _{2}=1}\left\Vert Q_{t}^{\psi }f\right\Vert
_{\infty }\leq \frac{C_{1}}{t^{1/(2\nu )}}\exp (2K_{0}t).
\end{equation*}%
Since $Q_{t}^{-\psi }$ is the adjoint of the operator $Q_{t}^{\psi }$, we
see that
\begin{equation*}
\left\Vert Q_{t}^{\psi }\right\Vert _{1\rightarrow 2}:=\sup_{\left\Vert
f\right\Vert _{1}=1}\left\Vert Q_{t}^{\psi }f\right\Vert _{2}=\left\Vert
Q_{t}^{-\psi }\right\Vert _{2\rightarrow \infty }\leq \frac{C_{1}}{%
t^{1/(2\nu )}}\exp (2K_{0}t)
\end{equation*}%
and thus,
\begin{equation*}
\left\Vert Q_{t}^{\psi }\right\Vert _{1\rightarrow \infty }\leq \left\Vert
Q_{t/2}^{\psi }\right\Vert _{1\rightarrow 2}\left\Vert Q_{t/2}^{\psi
}\right\Vert _{2\rightarrow \infty }\leq \frac{C_{1}^{2}}{(t/2)^{1/\nu }}%
\exp (2K_{0}t).
\end{equation*}%
From this, we conclude that the heat kernel $q_{t}^{(\rho )}(x,y)$ of $(%
\mathcal{E}^{(\rho )},\mathcal{F}^{(\rho )})$ exists and satisfies
\begin{equation}
q_{t}^{(\rho )}(x,y)\leq \frac{C}{t^{1/\nu }}\exp \left( 2K_{0}t+\lambda (%
\mathbf{1}_{B}(y)-\mathbf{1}_{B}(x))\right) .  \label{20}
\end{equation}

Therefore, for $\mu $-almost all $x\in B,y\in B^{c}$ and for all $t>0$%
\begin{equation*}
q_{t}^{(\rho )}(x,y)\leq \frac{C}{t^{1/\nu }}\exp \left( 2K_{0}t-\lambda
\right) .
\end{equation*}%
Letting $\lambda \rightarrow \infty $, we have $q_{t}^{(\rho )}(x,y)=0$. The
proof is complete.
\end{proof}

Lemma \ref{L1} says that the heat kernel $q_{t}^{(\rho )}(x,y)$ of the
truncated Dirichlet form will \emph{vanish} for any $t>0$ when points $x$
and $y$ are separated by any ball of radius $r>\rho $. This means that the
Hunt process associated with the truncated Dirichlet form never goes far
away than the truncated range $\rho $. This lemma will give the strong tail
estimate of the heat semigroup $\left\{ P_{t}\right\} $ of the form $\left(
\mathcal{E},\mathcal{F}\right) $. This new phenomenon arises from the
ultra-metric property of the space $M$. { General metric spaces do not}
admit this nice property. For example, { for $\delta \in (0,1)$ ($\delta$ is similar to $\rho$ in this paper), the Dirichlet form $(\mathcal{E}^{\delta},\mathcal{F}^{\delta})$ is regular on $\mathbb{R}^d$ (see \cite[(2.14)-(2.16)]{ChenKimKumagai.2008.MA833}). Let $Z^{\delta}$ be a symmetric Markov process associated
with $(\mathcal{E}^{\delta},\mathcal{F}^{\delta})$.
The generator of $Z^{\delta}$ is
\begin{align}
  \mathcal{L}^{\delta}u(x)=\lim_{\varepsilon \downarrow 0}\int_{\{y\in \mathbb{R}^d:|y-x|>\varepsilon\}}(u(y)-u(x))J_{\delta}(x,y)dy.\notag
\end{align}
where $J_{\delta}(x,y)$ is a symmetric non-negative function (see \cite[(3.4)]{ChenKimKumagai.2008.MA833}). $Z^{\delta}$ has an quasi-continuous heat kernel $q^{(\delta)}_t(x,y)$ defined on
$[0,+\infty)\times \mathbb{R}^d \times \mathbb{R}^d$. For $\delta_0>0$, there exist $c=c(\delta_0)>0$ such that $q^{(\delta)}_t(x,y)\geq ct^{-d/2}$ for every $t>\delta_0$ and quasi everywhere $x,y$ with $|x-y|^2\leq t$ (see \cite[Theorem $3.4$]{ChenKimKumagai.2008.MA833})}.
\

\begin{proposition}
\label{P4}Assume that conditions $\left( TJ\right) ,\left( DUE\right) $
hold. Then for any ball $B$ of radius $r\in (0,R_{0})$ and for any $t>0$%
\begin{equation}
P_{t}\mathbf{1}_{B^{c}}\leq \frac{Ct}{r^{\beta }}\text{ in B}  \label{23}
\end{equation}%
for some universal constant $C>0$ (independent of $t,B$).
\end{proposition}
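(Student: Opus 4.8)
The plan is to compare the heat semigroup $\{P_{t}\}$ of $(\mathcal{E},\mathcal{F})$ with the heat semigroup $\{Q_{t}\}$ of the truncated form $(\mathcal{E}^{(\rho)},\mathcal{F}^{(\rho)})$ at the truncation range $\rho=r$, feeding Lemma~\ref{L1} into a Duhamel formula. Recall that $\mathcal{F}^{(r)}=\mathcal{F}$ and that, as shown in the proof of the Nash inequality~(\ref{Nash}), the \lq\lq long-range\rq\rq\ bilinear form
\[
\mathcal{B}(u,v):=\int_{M}\int_{M}(u(x)-u(y))(v(x)-v(y))\mathbf{1}_{\{d(x,y)>r\}}\,dj(x,y)
\]
is bounded on $L^{2}$, with $\mathcal{B}(u)\le Cr^{-\beta}\|u\|_{2}^{2}$. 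Since $\mathcal{E}=\mathcal{E}^{(r)}+\mathcal{B}$, the operator $V:=\mathcal{L}-\mathcal{L}^{(r)}$ is a bounded self-adjoint operator on $L^{2}$, and on $L^{2}\cap L^{\infty}$ it is given by the absolutely convergent integral
\[
Vg(x)=2\int_{B(x,r)^{c}}(g(y)-g(x))\,J(x,dy),
\]
the convergence being guaranteed by $(TJ)$, which also gives $J(x,B(x,r)^{c})\le Cr^{-\beta}$ because $r\in(0,R_{0})$. In particular, for any $g$ with $0\le g\le 1$, discarding the non-positive term $-g(x)$ and using $g(y)\le 1$ yields the pointwise bound $Vg\le 2J(\cdot,B(\cdot,r)^{c})\le 2Cr^{-\beta}$.

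Because $V$ is bounded, the variation-of-parameters identity $P_{t}f=Q_{t}f+\int_{0}^{t}P_{t-s}VQ_{s}f\,ds$ holds rigorously in $L^{2}$ for every $f\in L^{2}$. Apply it to any $f$ with $0\le f\le\mathbf{1}_{B^{c}}$ and $f\in L^{2}\cap L^{\infty}$. Since $f$ vanishes on $B$, Lemma~\ref{L1} (with $\rho=r$) gives $Q_{t}f=\int_{B^{c}}q_{t}^{(r)}(\cdot,y)f(y)\,d\mu(y)=0$ $\mu$-a.e.\ in $B$. Moreover $Q_{s}$ is sub-Markovian, so $0\le Q_{s}f\le 1$ and hence $VQ_{s}f\le 2Cr^{-\beta}$ $\mu$-a.e.; since $P_{t-s}$ is positivity preserving and $P_{t-s}\mathbf{1}\le\mathbf{1}$, this gives $P_{t-s}(VQ_{s}f)\le 2Cr^{-\beta}$ $\mu$-a.e. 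Integrating over $s\in(0,t)$ and restricting to $B$ we obtain $P_{t}f\le 2Ctr^{-\beta}$ $\mu$-a.e.\ in $B$. Finally, pick measurable $A_{n}\subset B^{c}$ with $\mu(A_{n})<\infty$ and $A_{n}\uparrow B^{c}$; applying the bound with $f=\mathbf{1}_{A_{n}}$ and letting $n\to\infty$ (monotone convergence, the non-negative kernel $p_{t}$ existing by $(DUE)$) gives $P_{t}\mathbf{1}_{B^{c}}\le Ct/r^{\beta}$ $\mu$-a.e.\ in $B$, which is~(\ref{23}).

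The only steps that need care are the identification of $V$ as a bounded $L^{2}$-operator with the stated integral representation---which rests entirely on the identity $\mathcal{F}^{(r)}=\mathcal{F}$ together with the quantitative estimate already established in the proof of~(\ref{Nash})---and the passage $f\uparrow\mathbf{1}_{B^{c}}$ when $\mu(M)=\infty$, handled by the exhaustion above; the Duhamel step itself is classical once $V$ is known to be bounded. The genuinely new ingredient is Lemma~\ref{L1}: it is the ultra-metric property that forces $Q_{t}\mathbf{1}_{B^{c}}$ to vanish \emph{exactly} on $B$ for all $t$, collapsing the comparison to the single error term $\int_{0}^{t}P_{t-s}VQ_{s}f\,ds$, which $(TJ)$ controls by $Ct/r^{\beta}$.
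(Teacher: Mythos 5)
Your proof is correct and follows essentially the same route as the paper: Lemma~\ref{L1} forces $Q_{t}\mathbf{1}_{B^{c}}=0$ on $B$, and the difference $P_{t}-Q_{t}$ is controlled by $2t\sup_{x}J(x,B(x,\rho)^{c})\leq Ct/r^{\beta}$ via $(TJ)$. The only difference is that the paper simply invokes the comparison inequality of Proposition~\ref{P8} (quoted from the literature), whereas you re-derive it inline by the Duhamel formula for the bounded perturbation $V=\mathcal{L}-\mathcal{L}^{(r)}$ and add the exhaustion argument for $\mathbf{1}_{B^{c}}\notin L^{2}$ --- both of which are sound.
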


\begin{proof}
By Lemma \ref{L1}, we have that for $\mu $-almost all $x\in B$ and all $t>0$%
,
\begin{equation*}
Q_{t}\mathbf{1}_{B^{c}}(x)=\int_{B^{c}}q_{t}^{(\rho )}(x,y)d\mu (y)=0
\end{equation*}%
if $0<\rho \leq r$. Applying Proposition \ref{P8} in Appendix for $\rho =%
\frac{r}{2},\Omega =M,f=\mathbf{1}_{B^{c}}$, we see that%
\begin{equation*}
P_{t}\mathbf{1}_{B^{c}}\leq Q_{t}\mathbf{1}_{B^{c}}+2t\sup_{x\in
M}J(x,B(x,\rho )^{c})\leq C\frac{t}{r^{\beta }}\text{ \ in }B.
\end{equation*}%
The proof is complete.
\end{proof}

We are in a position to prove Theorem \ref{T1}.

\begin{proof}[Proof of Theorem \protect\ref{T1}]
{  Fix points $x_{0},y_{0}\in M$ and fix $t\in(0,R_{0}^{\beta })$. Without loss of generality, assume that $\frac{d(x_0,y_0)}{t^{1/\beta }}\geq 1$; otherwise $\left( wUE\right) $ follows
directly from $\left( DUE\right) $ and nothing is proved.
Let $r=\frac{d(x_0,y_0)}{2}$. Noting that $P_t1_{B^c}$ is monotone decreasing in $r$, we have from (\ref{23}), for $r>0, t>0$
\begin{equation}\label{24}
P_t1_{B^c}\leq\frac{Ct}{(r\wedge R_0 )^{\beta}}
\end{equation}}
It follows from condition $\left( DUE\right) $ and (\ref%
{24}), for $\mu $-almost all $x\in B(x_{0},r),y\in B(y_{0},r)$, we have
\begin{eqnarray*}
p_{2t}(x,y) &=&\int_{M}p_{t}(x,z)p_{t}(z,y)d\mu (z) \\
&\leq &\int_{B(x_{0},r)^{c}}p_{t}(x,z)p_{t}(z,y)d\mu
(z)+\int_{B(y_{0},r)^{c}}p_{t}(x,z)p_{t}(z,y)d\mu (z) \\
&\leq &\sup_{z\in M}p_{t}(z,y)\int_{B(x_{0},r)^{c}}p_{t}(x,z)d\mu
(z)+\sup_{z\in M}p_{t}(x,z)\int_{B(y_{0},r)^{c}}p_{t}(z,y)d\mu (z) \\
&\leq &2\frac{C}{t^{\alpha /\beta }}\cdot \frac{Ct}{(r\wedge R_0)^{\beta }}.
\end{eqnarray*}%
Therefore for $\mu $-almost all $x_{0},y_{0}\in M$ and $t\in (0,R_{0}^{\beta
})$, we conclude that%
\begin{equation*}
p_{2t}(x_{0},y_{0})\leq \frac{C}{t^{\alpha /\beta }}\cdot \frac{t}{
(d(x_{0},y_{0})\wedge R_0)^{\beta }}.
\end{equation*}%
{ This inequality together with $\left( DUE\right) $ will imply $\left( wUE\right) $}. The proof is complete.
\end{proof}

\section{Appendix}

The following was first shown in \cite[Lemma 3.21]%
{CarlenKusuokaStroock.1987.AIHPPS245}, and then was modified in \cite[Lemma
2.6]{MuruganSaloff.2015.dav} (see also \cite[Lemma 3.4]{HuLi.2018}).

\begin{proposition}
\label{P9}Let $w:(0,\infty )\rightarrow (0,\infty )$ be a non-decreasing
function and suppose that $u\in C^{1}([0,\infty );(0,\infty ))$ satisfies
that for all $t\geq 0$,
\begin{equation*}
u^{\prime }(t)\leq -b\frac{t^{p-2}}{w^{\theta }(t)}u^{1+\theta }(t)+Ku(t)
\end{equation*}%
for some $b>0,$ $p>1$, $\theta >0$ and $K>0$. Then
\begin{equation*}
u(t)\leq \left( \frac{2p^{a}}{\theta b}\right) ^{1/\theta }t^{-(p-1)/\theta
}e^{Kp^{-a}t}w(t)
\end{equation*}%
for any $a\geq 1$.
\end{proposition}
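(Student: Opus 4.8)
The plan is to read the hypothesis as a Bernoulli-type differential inequality, linearize it by passing to $y:=u^{-\theta}$, and then integrate with an integrating factor over a subinterval $[s_0,t]\subset(0,t]$ whose left endpoint $s_0$ is chosen close to $t$. Integrating all the way from $0$ is easier but only produces the crude exponential factor $e^{Kt}$; the sharper factor $e^{Kp^{-a}t}$ in the statement is precisely what one gains by starting the integration near $t$ instead, and the constant $2p^{a}$ is the price paid for that.

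\emph{Linearization and integration.} Fix $t>0$. Since $u>0$ on $[0,\infty)$, multiplying the hypothesis by $-\theta u^{-\theta-1}<0$ reverses it, and with $y=u^{-\theta}$ it becomes $y'(s)+\theta K\,y(s)\ge \theta b\,s^{p-2}w(s)^{-\theta}$ for $s>0$; multiplying by $e^{\theta K s}$ makes the left side $\bigl(e^{\theta K s}y(s)\bigr)'$. Put $s_0:=(1-p^{-a})t$, which lies in $(0,t)$ because $p>1$, $a\ge1$. Integrating over $[s_0,t]$, discarding the nonnegative term $e^{\theta K s_0}y(s_0)$, and using that $w$ is non-decreasing (so $w(s)\le w(t)$ on $[s_0,t]$), that $e^{\theta K s}\ge e^{\theta K s_0}$ there, and that $p>1$ makes $\int_{s_0}^{t}s^{p-2}\,ds=(t^{p-1}-s_0^{p-1})/(p-1)$, I obtain
\begin{equation*}
e^{\theta K t}y(t)\ \ge\ \theta b\int_{s_0}^{t}e^{\theta K s}\frac{s^{p-2}}{w(s)^{\theta}}\,ds\ \ge\ \frac{\theta b\,e^{\theta K s_0}}{(p-1)\,w(t)^{\theta}}\bigl(t^{p-1}-s_0^{p-1}\bigr),
\end{equation*}
that is, $y(t)\ge \theta b\,(t^{p-1}-s_0^{p-1})\,e^{-\theta K(t-s_0)}\big/\bigl((p-1)w(t)^{\theta}\bigr)$.

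\emph{Elementary estimate and conclusion.} With this $s_0$ one has $t-s_0=p^{-a}t$ and $t^{p-1}-s_0^{p-1}=t^{p-1}\bigl(1-(1-p^{-a})^{p-1}\bigr)$. Using $1-x\le e^{-x}$ followed by $e^{-y}\le 1-y+\tfrac12 y^2$ for $y\ge0$, applied with $y=(p-1)p^{-a}<1$, one gets $(1-p^{-a})^{p-1}\le 1-\tfrac12(p-1)p^{-a}$, hence $t^{p-1}-s_0^{p-1}\ge\tfrac12(p-1)p^{-a}t^{p-1}$. Substituting,
\begin{equation*}
y(t)\ \ge\ \frac{\theta b}{2p^{a}}\cdot\frac{t^{p-1}}{w(t)^{\theta}}\cdot e^{-\theta K p^{-a}t},
\end{equation*}
and inverting $y=u^{-\theta}$ gives $u(t)\le\bigl(2p^{a}/(\theta b)\bigr)^{1/\theta}\,t^{-(p-1)/\theta}\,e^{K p^{-a}t}\,w(t)$; since $t>0$ was arbitrary, this is the claim.

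The bookkeeping in the first step (the sign chase, the integrating factor, the evaluation of $\int s^{p-2}\,ds$, the two scalar inequalities) is routine and I would keep it to a couple of lines. The only genuinely delicate point is the choice of $s_0$: it must balance making $t-s_0$ small, which keeps the exponent down to $\theta K p^{-a}t$, against keeping $t^{p-1}-s_0^{p-1}$ comparable to $t^{p-1}$, which keeps the constant down to $2p^{a}$; the value $s_0=(1-p^{-a})t$ is the one for which the elementary estimate — valid exactly because $(p-1)p^{-a}<1$ — closes the argument with precisely the stated constant.
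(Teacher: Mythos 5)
Your proof is correct, and it is essentially the standard argument behind this lemma: the paper itself gives no proof of Proposition \ref{P9} but cites \cite[Lemma 3.21]{CarlenKusuokaStroock.1987.AIHPPS245} and \cite[Lemma 2.6]{MuruganSaloff.2015.dav}, and those proofs proceed exactly as you do --- the Bernoulli substitution $y=u^{-\theta}$, the integrating factor $e^{\theta Ks}$, integration over $[(1-p^{-a})t,\,t]$, monotonicity of $w$, and the elementary bound $1-(1-p^{-a})^{p-1}\ge \tfrac12(p-1)p^{-a}$. All the constants check out, so nothing further is needed.
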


The following result was proved in \cite[Proposition 4.6]%
{GrigoryanHuLau.2014.TAMS6397}.

\begin{proposition}
\label{P8}For an open set $\Omega \subset M$, let $\left\{ P_{t}^{\Omega
}\right\} $ and $\left\{ Q_{t}^{\Omega }\right\} $ be the heat semigroups of
$(\mathcal{E},$\QTR{cal}{F}$(\Omega ))$ and $(\mathcal{E}^{(\rho )},$%
\QTR{cal}{F}$^{(\rho )}(\Omega ))$, respectively. Then for any $t>0$
\begin{equation*}
P_{t}^{\Omega }f\leq Q_{t}^{\Omega }f+2t\sup_{x\in M}J(x,B(x,\rho
)^{c})\left\Vert f\right\Vert _{\infty }.
\end{equation*}
\end{proposition}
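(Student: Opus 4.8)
The plan is to realize $(\mathcal{E},\mathcal{F}(\Omega))$ as a \emph{bounded} perturbation of $(\mathcal{E}^{(\rho)},\mathcal{F}^{(\rho)}(\Omega))$ and then run a Duhamel (variation-of-parameters) comparison of the two semigroups. Write $\mathcal{L}_\Omega$ and $\mathcal{L}_\Omega^{(\rho)}$ for the (non-positive, self-adjoint) generators of $\{P_t^\Omega\}$ and $\{Q_t^\Omega\}$. Since the short-range part carried by $B(x,\rho)$ is common to both forms, for nice $u$ the difference of the two generators is exactly the long-range jump operator
\begin{equation*}
A u(x):=(\mathcal{L}_\Omega-\mathcal{L}_\Omega^{(\rho)})u(x)=\int_{B(x,\rho)^c}(u(y)-u(x))\,J(x,dy).
\end{equation*}
First I would check that $A$ is a bounded operator on both $L^\infty$ and $L^2$: the pointwise bound $|Au(x)|\le 2\|u\|_\infty\sup_{x\in M}J(x,B(x,\rho)^c)$, together with the symmetry of $j$ and (\ref{31}) (which holds under $(TJ)$), gives $\|A\|<\infty$ by a Schur-type estimate. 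Because the perturbation is bounded, $\mathcal{L}_\Omega$ and $\mathcal{L}_\Omega^{(\rho)}$ share a common domain with $\mathcal{L}_\Omega=\mathcal{L}_\Omega^{(\rho)}+A$, so all domain subtleties disappear.

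Next I would differentiate $s\mapsto Q_{t-s}^\Omega P_s^\Omega f$ on $[0,t]$. Using $\frac{d}{ds}Q_{t-s}^\Omega=-\mathcal{L}_\Omega^{(\rho)}Q_{t-s}^\Omega$, $\frac{d}{ds}P_s^\Omega f=\mathcal{L}_\Omega P_s^\Omega f$, and the commutation of $Q_{t-s}^\Omega$ with $\mathcal{L}_\Omega^{(\rho)}$, the product rule yields $\frac{d}{ds}\big(Q_{t-s}^\Omega P_s^\Omega f\big)=Q_{t-s}^\Omega A\,P_s^\Omega f$. Integrating from $0$ to $t$ gives Duhamel's formula
\begin{equation*}
P_t^\Omega f-Q_t^\Omega f=\int_0^t Q_{t-s}^\Omega A\,P_s^\Omega f\,ds,
\end{equation*}
which is rigorous precisely because $A$ is bounded, so the integrand is a continuous $L^2$-valued function of $s$.

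Then I would insert the pointwise estimate. Since $\{P_s^\Omega\}$ is sub-Markovian, $\|P_s^\Omega f\|_\infty\le\|f\|_\infty$, so applying the displayed formula for $A$ with $u=P_s^\Omega f$ gives, for $\mu$-almost every $x$,
\begin{equation*}
A\,P_s^\Omega f(x)\le \int_{B(x,\rho)^c}\big(|P_s^\Omega f(y)|+|P_s^\Omega f(x)|\big)\,J(x,dy)\le 2\|f\|_\infty\sup_{x\in M}J(x,B(x,\rho)^c)=:c.
\end{equation*}
Because $\{Q_{t-s}^\Omega\}$ is positivity preserving and sub-Markovian, the nonnegative constant $c$ satisfies $Q_{t-s}^\Omega\big(A\,P_s^\Omega f\big)\le Q_{t-s}^\Omega c\le c$. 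Substituting into Duhamel's formula and integrating over $s\in[0,t]$ yields $P_t^\Omega f-Q_t^\Omega f\le t\,c=2t\sup_{x\in M}J(x,B(x,\rho)^c)\,\|f\|_\infty$, which is exactly the assertion.

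The main obstacle is the rigorous justification of the Duhamel identity and the $\mu$-a.e. reading of the generator difference; this is precisely where $(TJ)$ pays off, since (\ref{31}) forces $A$ to be a genuinely bounded operator, reducing the comparison to elementary bounded-perturbation theory rather than a delicate argument about unbounded jump generators. Two minor points remain: one need not assume $f\ge0$, since the bound $|P_s^\Omega f|\le\|f\|_\infty$ already produces the stated factor $2$; and one should confirm that restricting both forms from $M$ to the part on $\Omega$ leaves the difference operator $A$ unchanged, which holds because both parts are taken with the same (Dirichlet) boundary convention.
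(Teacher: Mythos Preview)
The paper does not give its own proof of Proposition~\ref{P8}; it simply quotes the result from \cite[Proposition 4.6]{GrigoryanHuLau.2014.TAMS6397}. Your Duhamel/bounded-perturbation argument is exactly the standard route to such a comparison inequality and is essentially what the cited reference does, so the overall strategy is fine and would be accepted.

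One correction on the constant, however. In this paper the form (\ref{EE}) is written \emph{without} the customary factor $\tfrac12$, so when you pass from the form difference $\mathcal{E}-\mathcal{E}^{(\rho)}$ to the generator difference you pick up a factor of $2$: the correct identity is
\[
Au(x)=(\mathcal{L}_\Omega-\mathcal{L}_\Omega^{(\rho)})u(x)=2\int_{B(x,\rho)^c}\big(u(y)-u(x)\big)\,J(x,dy).
\]
With this normalization your bound $|Au|\le 2\|u\|_\infty\sup_x J(x,B(x,\rho)^c)$ becomes $|Au|\le 4\|u\|_\infty\sup_x J(x,B(x,\rho)^c)$, and the Duhamel integral then yields $4t\sup_x J(x,B(x,\rho)^c)\|f\|_\infty$ rather than $2t(\cdots)$. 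The stated constant $2$ is recovered precisely by assuming $f\ge 0$ (which is the only case used in the paper, namely $f=\mathbf{1}_{B^c}$ in Proposition~\ref{P4}): then $P_s^\Omega f\ge 0$ and one can drop the $-P_s^\Omega f(x)$ term to get
\[
A\,P_s^\Omega f(x)\le 2\int_{B(x,\rho)^c}P_s^\Omega f(y)\,J(x,dy)\le 2\|f\|_\infty\sup_{x\in M}J(x,B(x,\rho)^c).
\]
So your closing remark that ``one need not assume $f\ge 0$'' is not quite right for the constant as stated; without the sign assumption your argument gives the inequality with $4t$ in place of $2t$, which of course is harmless for every application in the paper.
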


\bibliographystyle{siam}
\bibliography{reflong}

\begin{thebibliography}{10}

\bibitem{Bendikov2014Isotropic}
{\sc A.~Bendikov, A.~Grigor'yan, C.Pittet, and W.Woess}, {\em Isotropic
  {M}arkov semigroups on ultra-metric spaces}, Russian Mathematical Surveys, 69
  (4(2014)), pp.~589--680.

\bibitem{BendikovGAHU2018}
{\sc A.~Bendikov, A.~Grigor'yan, E.~Hu, and J.Hu}, {\em Heat kernels and
  non-local {D}irichlet forms on ultrametric spaces}, preprint.

\bibitem{CarlenKusuokaStroock.1987.AIHPPS245}
{\sc E.~A. Carlen, S.~Kusuoka, and D.~W. Stroock}, {\em Upper bounds for
  symmetric {M}arkov transition functions}, Ann. Inst. H. Poincar\'e Probab.
  Statist., 23 (1987), pp.~245--287.

\bibitem{ChenKimKumagai.2008.MA833}
{\sc Z.-Q. Chen, P.~Kim, and T.~Kumagai}, {\em Weighted {P}oincar\'e inequality
  and heat kernel estimates for finite range jump processes}, Math. Ann., 342
  (2008), pp.~833--883.

\bibitem{Davies.1987.AJM319}
{\sc E.~B. Davies}, {\em Explicit constants for {G}aussian upper bounds on heat
  kernels}, Amer. J. Math., 109 (1987), pp.~319--333.

\bibitem{FukushimaOshimaTakeda.2011.489}
{\sc M.~Fukushima, Y.~Oshima, and M.~Takeda}, {\em Dirichlet forms and
  symmetric {M}arkov processes}, vol.~19 of de Gruyter Studies in Mathematics,
  Walter de Gruyter \& Co., Berlin, extended~ed., 2011.

\bibitem{GrigoryanHuHu.2017.JFA3311}
{\sc A.~Grigor'yan, E.~Hu, and J.~Hu}, {\em Lower estimates of heat kernels for
  non-local {D}irichlet forms on metric measure spaces}, J. Funct. Anal., 272
  (2017), pp.~3311--3346.

\bibitem{GrigoryanHuHu.2018}
\leavevmode\vrule height 2pt depth -1.6pt width 23pt, {\em Two-sided estimates
  of heat kernels of jump type {D}irichlet forms}, Adv. Math., 330 (2018),
  pp.~433--515.

\bibitem{GrigoryanHuLau.2010.JFA2613}
{\sc A.~Grigor'yan, J.~Hu, and K.-S. Lau}, {\em Comparison inequalities for
  heat semigroups and heat kernels on metric measure spaces}, J. Funct. Anal.,
  259 (2010), pp.~2613--2641.

\bibitem{GrigoryanHuLau.2014.TAMS6397}
\leavevmode\vrule height 2pt depth -1.6pt width 23pt, {\em Estimates of heat
  kernels for non-local regular {D}irichlet forms}, Trans. Amer. Math. Soc.,
  366 (2014), pp.~6397--6441.

\bibitem{HuLi.2018}
{\sc J.~Hu and X.~Li}, {\em The {D}avies method revisited for heat kernel upper
  bounds of regular {D}irichlet forms on metric measure spaces}, Forum Math.,
  30 (2018), pp.~1129--1155.

\bibitem{MuruganSaloff.2015.dav}
{\sc M.~Murugan and L.~Saloff-Coste}, {\em Davies' method for anomalous
  diffusions}, Proc. Amer. Math. Soc., 145 (2017), pp.~1793--1804.

\end{thebibliography}

\end{document}